\pgfplotsset{compat=1.10}
\tikzset{>=latex}
\newif\ifextended
\newtheorem{teor}{Theorem}[section]
\newtheorem{prop}[teor]{Proposition}
\theoremstyle{definition}
\newtheorem{Definition}[teor]{Definition}
\theoremstyle{remark}
\newtheorem{rk}[teor]{Remark}
\long\def\elimina#1{}
\def\R{\mathbb{R}}
\def\Z{\mathbb{Z}}
\def\haus{\mathcal{H}^{1}}
\def\leb{\mathcal{L}^{2}}
\def\Lip{\textrm{Lip}}
\def\ne{\nu^E}
\def\curf{\kappa}
\def\eps{\varepsilon}
\def\acube{\text{\Fontskrivan A}_\eps}
\def\bcube{\text{\Fontskrivan B}_\eps}
\def\disc{\Xi}
\def\dist{d}
\def\cray#1{{[\![#1]\!]}}
\DeclareMathOperator{\inte}{int} 
 \DeclareMathOperator{\dive}{div}
\title[Crystalline evolutions in chessboard--like microstructures]%
{Crystalline evolutions in chessboard--like microstructures}%
\author[A.~Malusa]{Annalisa Malusa$^\dagger$}
\thanks{$^\dagger$ Dipartimento di Matematica ``G.\ Castelnuovo'', Sapienza 
Università di Roma, Piazzale Aldo  Moro 2, 00185 Roma, Italy, email: 
malusa@mat.uniroma1.it} 
\author[M.~Novaga]{Matteo Novaga$^\ddagger$}
\thanks{$^\ddagger$ Dipartimento di Matematica, Università di Pisa, Largo 
B. Pontecorvo 5, 56217 Pisa, Italy, email: matteo.novaga@unipi.it}
\keywords{Crystalline flow, homogenization, facet-breaking, pinning.}
\subjclass[2010]{Primary 53C44, Secondary 35B27}
\date{\today}
\begin{document}

\begin{abstract}
We describe the macroscopic behavior of evolutions by crystalline curvature of 
planar sets in a 
chessboard--like medium,
modeled by  a periodic forcing term. We show that the underlying  
microstructure may produce both pinning and confinement effects on the 
geometric 
motion.
\end{abstract}

\maketitle

\tableofcontents

\section{Introduction}

We are concerned with the asymptotic behavior of motions of planar curves 
according 
to the law
\begin{equation}\label{eqevol}
v = \curf + g\left(\frac{x}{\varepsilon},\frac{y}{\varepsilon}\right),
\end{equation}
where $v$ is the normal velocity, $\curf$ is the crystalline curvature, 
$g:\R^2\to \R$ is a periodic forcing term, with average $\bar g$,
and $\varepsilon>0$ is a small parameter which takes account of the frequency 
of oscillation. 

Crystalline evolutions provide simplified models for describing several 
phenomena in Materials Science
(see \cite{Gu,Ta, TCH} and references therein) and have been significantly 
studied in recent years
(see for instance \cite{AT, GG, BNP1,BNP2, CMP, CMNP, CN, Gi}). 

The forcing term $g$
models a rapidly oscillating heterogeneous medium and, in the 
homogenization limit $\varepsilon \to 0$,
the oscillations of the medium affect the velocity of the evolving front.
The geometric motion
\eqref{eqevol} corresponds to the gradient flow of the energy 
\[
F_\varepsilon(E)=\int_{\partial E} 
\bigl( |\nu_1^E|+|\nu_2^E|\bigr) \, d\haus +\int_E 
g\Bigl(\frac{x}{\varepsilon},\frac{y}{\varepsilon}\Bigr)\ 
d\leb,\qquad E\subset\R^2,
\]
where we identify the evolving curve with the boundary of a set $E$.
Since the volume term converges to 
$\bar g\,\leb(E)$ as $\varepsilon \to 0$, the $\Gamma$-limit of the 
functionals $F_\varepsilon$ (see \cite{GCB}) is given by
\[
\overline{F}(E)=\int_{\partial E} 
\bigl( |\nu_1^E|+|\nu_2^E|\bigr)\, d\haus +\bar g\,\leb(E).
\]
Hence, our analysis can be set in a large class of variational evolution 
problems dealing with limits of motions driven by functionals $F_\varepsilon$ 
depending on a small parameter. 

For oscillating functionals like $F_\varepsilon$, the energy landscape of the energies 
can be quite different from that of their $\Gamma$-limit, and the related
motions can be influenced by the presence of local minima which may give rise
to pinning phenomena, or
to effective homogenized velocities (see \cite{NV,BCN,BGN,Br}). 
In the case of geometric motions, a 
general understanding of the effects of microstructure is still missing.
Recently, some results have been obtained for two-dimensional crystalline
energies, for which a simpler description can be given in terms of a system
of ODEs (see for instance \cite{BGN,BSc,BCY,BSo,BGN,BMN}). 

\smallskip

Coming back to our specific problem \eqref{eqevol}, 
we assume for simplicity that $g$
takes only two values $\alpha<0<\beta$, its periodicity cell is $[0,1]^2$,
and that $\bar g=\frac{\alpha+\beta}{2}$. 
We also assume that $g$ has a 
chessboard structure, as specified in \eqref{eqg} below.


After a careful analysis, it turns out that curves evolving by \eqref{eqevol}
undergo a microscopic 
``facet-breaking'' phenomenon at a scale $\varepsilon$, with small segments of length proportional 
to $\varepsilon$ being created and, in some cases, subsequently reabsorbed. 
The macroscopic effect of this behavior is a ``pinning effect'' for the limit evolution,
corresponding to the possible onset 
of new edges, with slope of 45 degrees and zero velocity
(depending on the initial set and on the values $\alpha,\,\beta$).
On the other hand, the horizontal and the vertical edges always travel with the asymptotic velocity 
$\kappa + \bar g$. We thus obtain a characterization of the limit evolution, which is the
main result of this paper, and is stated precisely in Theorems \ref{d:square} and \ref{d:rect}.

We point out that, due to the possible presence of these new edges with zero velocity,
the limit flow does not coincide with the gradient flow of the limit functional
$\overline{F}$, which is simply given by $v=\curf + \bar g$. 

We recall that, in a previous paper \cite{BMN}, we considered a similar homogenization problem
where the periodic function g depends only on the horizontal variable, so that the medium 
has a stratified, opposite to a chessboard-like, structure.

It would be very interesting to extend our analysis to the isotropic variant of \eqref{eqevol},
where the crystalline curvature $\kappa$ is replaced by the usual curvature of the evolving curve, 
so that  \eqref{eqevol} becomes a forced curvature flow. However, as such evolution cannot be described in terms of a system of ODEs, different techniques would be needed (partial results in this direction can be found in \cite{CNV,CDN}).

\smallskip

The plan of the paper is the following: in Section \ref{s:sett} we introduce 
the notion of crystalline curvature and the evolution problem we are interested 
in. In Section \ref{s:calibr} we introduce the notion of calibrable edge, that 
is, an edge which does not break during the evolution, and we state the 
calibrability conditions. Finally, in Section 
\ref{s:motion} we characterize explicitly the limit evolution 
as $\varepsilon \to 0$, for initial data which are squares (Theorem \ref{d:square}) 
and more generally rectangles (Theorem \ref{d:rect}).

\smallskip 

\subsection*{Acknowledgments.} The authors wish to thank Andrea Braides for useful discussions on the topic of this paper.
The second author was partially supported by the Italian CNR-GNAMPA and by the University of Pisa via grant PRA-2017 ``Problemi di ottimizzazione e di evoluzione in
ambito variazionale''.

\section{Setting of the problem}\label{s:sett}

\subsection*{Notation}
 
The canonical basis of $\R^2$ will be denoted by $e_1=(1,0)$, $e_2=(0,1)$.
 
The 1--dimensional 
Hausdorff measure and the 2--dimensional Lebesgue measure in $\R^2$ will be 
denoted by $\haus$ and $\leb$, respectively.

We say that a set $E\subseteq \R^2$ is a  {\em  Lipschitz set} if its boundary
$\partial E$  can be written, locally, as the graph of a Lipschitz function 
(with respect to a suitable orthogonal coordinate system). 
The  {\em  outward normal} to $\partial E$ at $\xi$, that exists 
$\haus$--almost everywhere on 
$\partial E$, will be denoted by  $\ne=(\nu_1^E,\nu_2^E)$.

The Hausdorff distance between two sets
$E,\ F\in \R^2$ will be denoted by $d_H(E,F)$.

\subsection*{The crystalline curvature}

We briefly recall a notion of curvature $\kappa^E$ on 
$\partial E$
which is 
consistent with the requirement that a geometric evolution $E(t)$, 
reducing as fast as possible the energy
\[
P_\varphi(E):= \int_{\partial E} 
\bigl(|\nu_1^E|+|\nu_2^E|\bigr)\, d\haus,
\]
has normal velocity $\kappa^{E(t)}$ 
$\haus$--almost everywhere on $\partial E(t)$. 

The surface tension
$\varphi^\circ(x,y)=|x|+|y|$ is the polar function of the convex  norm 
$\varphi(x,y)=\max\{|x|,|y|\}$, $(x,y)\in\R^2$, so that
$P_\varphi(E)$ turns out to be the perimeter associated to the anisotropy $\varphi(x,y)$,
that is, the Minkowski content 
obtained by considering $(\R^2,\varphi)$ as a normed space.
The sets $\{\phi(\xi)\leq 1\}$ and 
$\{\phi^\circ(\xi)\leq 1\}$ are the 
square 
$K=[-1,1]^2$, and the square with corners at $(\pm 1,0)$ and $(0,\pm 1)$, 
respectively.

Given a nonempty compact set $E\subseteq \R^2$, if we denote by $\dist^E$ the 
{\em oriented  $\varphi$--distance function} to $\partial E$, negative inside 
$E$, that is,
\[
\dist^E(\xi):=\inf_{\eta\in E}\varphi(\xi-\eta) - \inf_{\eta\not\in  
E}\varphi(\xi-\eta), 
\qquad \xi\in\R^2.
\]
The normal cone at $\xi\in\partial E$ is well defined whenever $\xi$ is 
a differentiability point 
for $\dist^E$, and it is given by 
$T_{\phi^\circ}\bigl(\nabla\dist^E(\xi)\bigr)$,
where 
\[
T_{\varphi^\circ}(\xi^\circ) :=\{\xi\in \R^2, 
\xi\cdot\xi^\circ=(\varphi^\circ(\xi))^2\}, \quad \xi^\circ\in\R^2\,.
\]
The notion of intrinsic curvature in $(\R^2, \varphi)$ is based on the 
existence of regular selections of 
$T_{\phi^\circ}\bigl(\nabla\dist^E\bigr)$ on $\partial E$.

\begin{Definition}[$\varphi$--regular set, Cahn--Hoffmann field,  
$\varphi$--curvature] 
We say that a set $E\subseteq \R^2$ is {\em  $\varphi$--regular} if 
$\partial E$ is 
a compact Lipschitz curve, and there exists a 
vector field $n_\varphi\in \Lip(\partial E;\R^2)$ such that 
$n_\varphi \in T_{\varphi^\circ}(\nabla \dist^E)$ $\haus$--almost everywhere in 
$\partial E$.

\noindent Any selection of the multivalued 
function $T_{\varphi^\circ}(\nabla \dist^E)$
on $\partial E$ is called  a {\em  Cahn--Hoffmann vector field} for $\partial 
E$, and $\curf=\dive n_\varphi$ is the related {\em  
$\varphi$--curvature} (or {\em crystalline curvature}) of $\partial E$.
\end{Definition}




\begin{rk}[Edges and vertices]\label{r:nvertex}
A direct computation gives that $T_{\phi^\circ}(\xi^\circ)$ is a singleton
if $\varphi^\circ(\xi^\circ)=1$, and $\xi^\circ$ is not a coordinate vector.
Moreover one gets
\[ 
\begin{cases}
T_{\varphi^\circ}(e_1) & =\cray{(1,1),(1,-1)}, \\
T_{\varphi^\circ}(e_2) & =\cray{(-1,1),(1,1)},\\ 
T_{\varphi^\circ}(-e_1) & =\cray{(-1,1),(-1,-1)},\\ 
T_{\varphi^\circ}(-e_2) & =\cray{(-1,-1),(1,-1)}.
\end{cases}
\]
(Here and in the following $\cray{\xi,\eta}$
is the closed segment joining the vector $\xi$ with $\eta$).
The boundary of a $\varphi$--regular set $E$ is given by a finite number
of maximal closed arcs with the property that $T_{\varphi^\circ}(\nabla 
\dist^E)$
is a fixed set $T_A$ in the interior of each arc $A$. This set 
$T_A$ 
is either a singleton,  if the arc $A$ is not a horizontal or vertical 
segment, or one of the closed convex cones described above. 
The maximal arcs of $\partial E$ which are straight horizontal or 
vertical segments 
will be called {\em  edges}, and the endpoints of every arc will be called 
{\em  vertices} of $\partial E$.

The requirement of Lipschitz continuity keeps the  value of every 
Cahn--Hoffmann vector field fixed at vertices.
Hence, in 
order to exhibit a Cahn--Hoffmann vector field $n_\varphi$ on $\partial E$ it 
is enough to construct a field $n_A\in \Lip(A;\R^2)$ on each arc $A$, with the 
correct values 
at the vertices, and satisfying the constraint $n_A 
\in T_A$.
In what follows, with a little abuse of notation, we shall call $n_A$ the 
Cahn--Hoffmann vector field on the arc $A$.
\end{rk}

\subsection*{Forced crystalline flows}
Let $\alpha < 0 < \beta$, and let  $g\colon \R^2 \to \R$ be the function
defined in $[0,1]^2$ by 
\begin{equation}\label{eqg}
g(x,y)=
\begin{cases}
\alpha, & \text{in}\ \left]0,\dfrac{1}{2}\right[^2 \bigcup 
\left]\dfrac{1}{2},1\right[^2, \\[8pt]
\beta, & \text{in}\ \left(\left]\dfrac{1}{2},1\right[ \times 
\left]0,\dfrac{1}{2}\right[ \right) \bigcup \left(\left]0,\dfrac{1}{2}\right[ 
\times 
\left]\dfrac{1}{2},1\right[ \right),
\end{cases}
\end{equation}
and extended by periodicity in $\R^2$.
For $\eps>0$, let $g_{\epsilon}(x,y)=g(\frac{x}{\eps},\frac{y}{\eps})$.

We will denote by $\acube$ (resp. $\bcube$) the union of all closed squares 
$Q$ of side length $\eps$ such that $g_\eps=\alpha$ (resp. $g_\eps=\beta$) in 
the interior of $Q$.
The set of discontinuity points of $g_\eps$ will be denoted by $\disc$. A 
{\em discontinuity line} is a straight line contained in $\disc$. 

We define the 
multifunction $G_{\epsilon}$ in $\R^2$, by setting  
$G_{\epsilon}=[\alpha,\beta]$ on $\disc$, and 
$G_{\epsilon}(x,y)=\{g_\eps(x,y)\}$ in
$\R^2\setminus \disc$.

We want to introduce our notion of geometric evolution $E(t)$, obeying to the 
law
\begin{equation}\label{f:evlow}
V=\kappa+g_\varepsilon, \qquad \text{on}\ \partial E,
\end{equation}
where $V$ is the normal velocity, and $\kappa$ is the 
crystalline curvature on $\partial E(t)$.

In order to make a sense to \eqref{f:evlow} it would be enough to require that
the evolution is a family of $\varphi$--regular sets. Nevertheless, as 
underlined in Remark \ref{r:nvertex}, even if $E$ is a $\varphi$--regular set, 
the crystalline curvature on $\partial E$ may not be uniquely determined, 
due to the infinitely many choices  for the Cahn--Hoffmann vector field on the 
edges of $\partial E$. 

This ambiguity can be overcome introducing an additional postulate, which is 
consistent with the notion of forced curve shortening flow (see 
\cite{BNP1}, \cite{BNP2}, \cite{BNP3}, \cite{GR1}, \cite{GR2}).

\begin{Definition}[Variational Cahn--Hoffmann field] \label{d:vch}
A {\em  variational Cahn--Hoffmann vector field} for a $\varphi$--regular set
$E$ is a Cahn--Hoffmann vector field $n$ on $\partial E$ such that for every  
edge
$L$ of $\partial E$ not lying on a discontinuity line of $g_\eps$, the 
restriction $n_{L}$ of $n$ on $L$ is 
the unique minimum of the functional
\[
\mathcal{N}_{L}(n)= \int_{L} |g_\eps-\dive n|^2\, d\haus
\] 
in the set
\[
D_L=\left\{ 
n \in L^\infty(L,\R^2), n\in T_{L}, \dive n\in L^\infty(L),
n(p)=n_0, n(q)=n_1
\right\}
\]
where $p$, $q$ are the endpoints of $L$ and $n_0$, $n_1$ are the 
values at $p$, $q$ assigned to every Cahn--Hoffmann vector field
(see Remark \ref{r:nvertex}). 
\end{Definition}

\begin{rk} 
If the minimum $n_{L}$ in $D_L$ of the functional $\mathcal{N}_{L}$ 
satisfies the strict constraint $n_{L}(\xi)\in \inte T_{L}$ for every 
$\xi\in L$, then the velocity $g_\eps-\dive n_{L}$ is constant along the edge, 
that is the flat arc remains flat under the evolution. This is always the case
for unforced crystalline flows, since the unique minimum is the
interpolation of the assigned values at the vertices of $L$, and  the constant 
value 
of the $\varphi$--curvature is given by
\[ 
\kappa^{L}=\chi_{L}\frac{2}{\ell} \ \text{on}\ L,
\]
where $\ell$ is the length of the edge $L$ and $\chi_{L}$ is a 
convexity factor:
$\chi_{L}=1,-1,0$, depending on whether $E(t)$ is locally convex at $L$, 
locally 
concave at $L$, or neither.
\end{rk}

\begin{Definition}[Forced crystalline evolution] \label{d:flow}
Given $T>0$, we say that a family $E(t)$, $t\in [0,T]$, is a  
{\em forced crystalline  
curvature flow} (or {\em forced crystalline evolution}) 
in $[0,T)$
if 
\begin{itemize}
\item[(i)] $E(t)\subseteq\R^2$ is a Lipschitz set for every $t\in [0,T)$;
\item[(ii)] there exists an open set $A\subseteq \R^2\times [0,T)$ such that
$\bigcup_{t\in [0,T)}\partial E(t)\times \{t\}\subseteq A$, and the function
$d(\xi,t)\doteq d^{E(t)}(\xi)$ is locally Lipschitz in $A$;
\item[(iii)] there exists a function $n \in L^\infty(A,\R^2)$, with  
$\dive n\in L^\infty(A)$, such that the restriction of  $n(t, \cdot)$ to
$\partial E(t)$ is a variational Cahn--Hoffmann vector field for $\partial 
E(t)$ for almost every $t\in [0,T]$;
\item[(iv)] $\partial_t d-\dive n \in G_\eps$ 
$\haus$--almost everywhere in $\partial E(t)$ and for almost every $t\in[0,T)$.
\end{itemize}
\end{Definition}

\section{Calibrability conditions}\label{s:calibr}

In this section we deal with the minimum problem in Definition
\ref{d:vch} for a given $\varphi$--regular set $E$, and we characterize the 
edges of $\partial E$ having constant velocity $v_L:=\kappa^L+g_\eps$.

The results concern edges $L\in \partial E$ not lying on a discontinuity line 
of the forcing term, in such a way that $g_\eps$ is defined $\haus$--almost 
everywhere
on $L$. 
We will use the notation  $L=[p,q]\times \{y\}$ or $L=\{x\} \times [p,q]$, with 
$x,y \not\in \frac{\varepsilon}{2}\Z$, so that  $\ell=q-p$. 

Setting by $n\colon \R \to [-1,1]$ the unique 
varying component of the variational Cahn--Hoffmann 
vector field on $L$ (recall Remark \ref{r:nvertex}), the assigned 
values of $n$ are the following:
\begin{equation}\label{e:bcon}
(BV)=\begin{cases}
n(p)=n(q)=n_0\in \{\pm 1\} &  \text{if}\ \chi_L=0; \\
n(p)=-1,\ n(q)=1, &  \text{if}\ \chi_L=1; \\
n(p)=1,\ n(q)=-1, &  \text{if}\ \chi_L=-1.
\end{cases}
\end{equation}

Moreover, we denote by
$\gamma_\eps\colon \R \to \R$ the restriction of $g_\eps$ on the straight line
containing $L$, and we distinguish two different type of discontinuity points 
for $\gamma_\eps$:
\[
\mathcal{I}_{\beta,\alpha}=\{s\in \R \colon\ \gamma_\eps=\alpha\ \text{in} \ 
(s,s+\eps/2)\},
\quad
\mathcal{I}_{\alpha,\beta}=\{s\in \R \colon\ \gamma_\eps=\beta\ \text{in} \ 
(s,s+\eps/2)\}.
\]

With these notation, the requirement that
$\kappa+g_\eps$ is constant on $L$  can be rephrased in the following 1D 
problem.

\begin{Definition}[Calibrability conditions]\label{d:cali}
$L$ is a {\em calibrable edge} of $\partial E$ if and only 
if there exists a Lipschitz function $n\colon [p,q]\to \R$
such that the following hold.
\begin{itemize}
\setlength\itemsep{5pt}
\item[(i)] $n$ satisfies \eqref{e:bcon}.
\item[(ii)] $|n|\leq 1$ in $[p,q]$. 
\item[(iii)] $\displaystyle n'+\gamma_\varepsilon= \chi_L\frac{2}{\ell}+ 
\frac{1}{\ell} \int_p^q 
\gamma_\eps(s) \, ds$ a.e.\ in $[p,q]$.
\end{itemize}

In this case, we say that $v_L=n'+\gamma_\eps$ is the (normal) velocity of the 
edge 
$L$.
\end{Definition}

The calibrability property was studied, in its full generality, in \cite{BMN}. 
We collect here the results needed in the rest of the paper, sketching
the proofs for sake of completeness.

Denoting by $\ell_{\alpha}$, $\ell_{\beta}\in [0,\varepsilon/2]$ the 
non--negative lengths given by the conditions 
\begin{equation}\label{e:pezzetti}
 \ell - 
 \varepsilon\left\lfloor\dfrac{\ell}{\varepsilon}\right\rfloor=\ell_{\alpha}+ 
 \ell_{\beta}, \qquad 
 \int_L \gamma_\varepsilon(s)\, ds= \frac{\alpha+\beta}{2}\left(\ell-
 \ell_{\alpha}-\ell_{\beta}\right)+\alpha \ell_{\alpha}+\beta \ell_{\beta},
\end{equation}
the calibrability condition in Definition \ref{d:cali}(iii) sets the value of 
$n'$ outside 
the jump set of $\gamma_\varepsilon$:
\begin{equation}\label{f:conden3}
n'(s) = 
 \begin{cases}
\dfrac{1}{2\ell}\left(4 
\chi_L+(\beta-\alpha)(\ell-\ell_{\alpha}+\ell_{\beta})\right)
&  \text{if}\ 
 \gamma_\varepsilon(s)=\alpha , \\[10pt]
 \dfrac{1}{2\ell}\left(4 
 \chi_L-(\beta-\alpha)(\ell+\ell_{\alpha}-\ell_{\beta})\right),
  & \text{if}\ 
 \gamma_\varepsilon(s)=\beta.
 \end{cases}
\end{equation}
so that $n$ needs to be
\begin{equation}\label{f:CHespl}
n(s)=n(p)+(s-p)v_L-\int_p^s \gamma_\varepsilon(\tau)\, d\tau, 
\end{equation}
where $v_L$ is the feasible velocity of the edge $L$
\begin{equation} \label{f:velocita}
v_L=\chi_L\frac{2}{\ell}+\frac{\alpha+\beta}{2}+
\frac{\beta-\alpha}{2\ell}(\ell_\beta -\ell_\alpha).
\end{equation} 

In conclusion, the calibrability conditions (i) and (iii) in Definition 
\ref{d:cali} fix univocally a
candidate field \eqref{f:CHespl} which is continuous 
and affine
with given slope in each phase of $\gamma_\varepsilon$. This field $n$
is the Cahn--Hoffman field which calibrates $L$ with velocity 
\eqref{f:velocita} if and only if it also satisfies the constraint
$|n(x)| \leq 1$ for every $x\in [p,q]$.

\begin{rk} \label{f:asseps}
In what follows we will assume 
$0<\varepsilon < \dfrac{8}{\beta-\alpha}$
in such a way that the small perturbation
$\chi_L\dfrac{2}{\ell}+
\dfrac{\beta-\alpha}{2\ell}(\ell_\beta -\ell_\alpha)$ has the same sign of the 
curvature term $\chi_L\dfrac{2}{\ell}$. 
\end{rk}

%
%
 
\begin{prop}\label{r:hcurvz}
Let $L$ be an edge  with zero 
$\varphi$--curvature, and  let 
$n_0\in \{\pm 1\}$ be the given value of the 
Cahn--Hoffmann vector field at the endpoints of $L$.
Then the following 
hold.
\begin{itemize}
\item[(i)] If $\ell=\ell_\alpha+\ell_\beta < \varepsilon$, 
$L$ is calibrable with velocity 
$
v_L=\dfrac{\alpha \ell_{\alpha}+\beta 
\ell_{\beta}}{\ell_\alpha+\ell_\beta}
$
if and only if
\begin{itemize}
\item[(ia)] $n_0=1$, and either $\gamma_\varepsilon(p)=\beta$, 
$\gamma_\varepsilon(q)=\alpha$, or with an endpoint on 
$\mathcal{I}_{\alpha, 
\beta}$  ;
\item[(ib)] $n_0=-1$, and either $\gamma_\varepsilon(p)=\alpha$, 
$\gamma_\varepsilon(q)=\beta$, or with an endpoint on 
$\mathcal{I}_{\beta,\alpha}$ .
\end{itemize}
\item[(ii)] If $\ell\geq  \varepsilon$, $L$ is calibrable 
with velocity 
$
v_L=\dfrac{\alpha+\beta}{2}
$
if and only if
\begin{itemize}
\item[(iia)] $n_0=1$, and  $p,q \in \mathcal{I}_{\alpha, \beta}$;
\item[(iib)] $n_0=-1$, and $p,q \in \mathcal{I}_{\beta,\alpha}$.
\end{itemize}
\end{itemize}

\end{prop}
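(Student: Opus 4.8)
The plan is to reduce the whole statement to the explicit description of the candidate Cahn--Hoffmann field obtained just before Remark~\ref{f:asseps}: for $\chi_L=0$ the boundary data \eqref{e:bcon} are $n(p)=n(q)=n_0\in\{\pm1\}$; conditions (i) and (iii) of Definition~\ref{d:cali} then force $v_L$ to equal \eqref{f:velocita} and $n$ to be the piecewise affine function \eqref{f:CHespl} (which automatically satisfies $n(q)=n_0$); and $L$ is calibrable with velocity $v_L$ precisely when this $n$ fulfils $|n|\le1$ on $[p,q]$. Hence it suffices to read off the shape of $n$ for the finitely many placements of $[p,q]$ on the $\varepsilon/2$-grid and to decide when $n$ remains in $[-1,1]$.

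First I would collect the data. By \eqref{f:conden3} with $\chi_L=0$, $n$ is nondecreasing on the cells where $\gamma_\varepsilon=\alpha$ and nonincreasing on the cells where $\gamma_\varepsilon=\beta$. In case~(i), $\ell=\ell_\alpha+\ell_\beta<\varepsilon$, so $\ell-\ell_\alpha=\ell_\beta$ and $\ell-\ell_\beta=\ell_\alpha$: the slopes reduce to $\frac{(\beta-\alpha)\ell_\beta}{\ell}\ge0$ and $-\frac{(\beta-\alpha)\ell_\alpha}{\ell}\le0$, and \eqref{f:velocita} reduces to $v_L=\frac{\alpha\ell_\alpha+\beta\ell_\beta}{\ell}$, as in the statement. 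The one quantitative input is that on any single ``bump'' the oscillation of $n$ is at most $\frac{(\beta-\alpha)\ell_\alpha\ell_\beta}{\ell}\le\frac{\beta-\alpha}{4}(\ell_\alpha+\ell_\beta)<\frac{(\beta-\alpha)\varepsilon}{4}<2$, the last inequality being precisely the standing bound $\varepsilon<8/(\beta-\alpha)$ of Remark~\ref{f:asseps} (in case~(ii) each complete cell changes $n$ by $\frac{(\beta-\alpha)\varepsilon}{4}<2$). Consequently, once $n$ moves away from an endpoint value $\pm1$ in the direction allowed by $|n|\le1$, it can never reach the opposite bound: the obstruction sits at the first cell met by $[p,q]$.

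For case~(i), since $\ell<\varepsilon$ the segment $[p,q]$ meets at most three consecutive cells, so --- leaving aside the trivial sub-case in which $\gamma_\varepsilon$ is constant on $L$ (calibrated by the constant field) --- one reads along $[p,q]$ one of the patterns $\beta\alpha$, $\alpha\beta$, $\beta\alpha\beta$, $\alpha\beta\alpha$. If one reads $\beta$ then $\alpha$ (this is condition (ia): $\gamma_\varepsilon(p)=\beta$, $\gamma_\varepsilon(q)=\alpha$, or an endpoint on $\mathcal I_{\alpha,\beta}$ when the leading $\beta$-cell is complete), then $n$ first decreases and then returns to $n_0$, so by the oscillation bound $|n|\le1$ holds iff $n_0=1$. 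Symmetrically, reading $\alpha$ then $\beta$ gives (ib) with $n_0=-1$. In the patterns $\beta\alpha\beta$ and $\alpha\beta\alpha$ both $p$ and $q$ are interior to their cells, hence $n$ has an interior local maximum strictly above $n_0$ and an interior local minimum strictly below $n_0$; so $|n|\le1$ fails for either value of $n_0$, consistently with the fact that then neither (ia) nor (ib) holds. This proves the equivalence in~(i).

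For case~(ii), $\ell\ge\varepsilon$, the candidate velocity is still \eqref{f:velocita}, so demanding it equal $\frac{\alpha+\beta}{2}$ amounts to $\ell_\alpha=\ell_\beta$. If $\ell_\alpha=\ell_\beta>0$ the field is a full sawtooth of amplitude $\frac{(\beta-\alpha)\varepsilon}{4}$ together with a partial bump at each end, and an elementary check shows it leaves $[-1,1]$ whatever the sign of $n_0$; hence necessarily $\ell_\alpha=\ell_\beta=0$, i.e.\ $\ell\in\varepsilon\Z$, which forces $p$ and $q$ to be discontinuity points of the same type. If $p,q\in\mathcal I_{\alpha,\beta}$ the sawtooth starts by decreasing and stays in $[\,n_0-\frac{(\beta-\alpha)\varepsilon}{4},\,n_0\,]$, so $|n|\le1$ iff $n_0=1$, which is (iia); if $p,q\in\mathcal I_{\beta,\alpha}$ it starts by increasing and one needs $n_0=-1$, which is (iib); in the remaining mixed case $\ell$ is an odd multiple of $\varepsilon/2$, whence $\ell_\alpha\ne\ell_\beta$ and the candidate velocity is not $\frac{\alpha+\beta}{2}$. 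This gives the equivalence in~(ii). The step I expect to be most delicate is the ``only if'' direction: ensuring the case analysis over the positions of $[p,q]$ on the grid is exhaustive and, in particular, that $\ell_\alpha=\ell_\beta>0$ together with $|n|\le1$ is genuinely impossible in case~(ii), which requires tracking how the two partial end cells interact with the interior sawtooth.
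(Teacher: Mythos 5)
Your reduction to the explicit candidate field of \eqref{f:CHespl}--\eqref{f:velocita}, together with the sign information from \eqref{f:conden3} and the check $|n|\le1$, is exactly the route the paper takes; the only stylistic difference is that in case (ii) the paper obtains $\ell_\alpha=\ell_\beta$ from the one-period increment identity $n(p+\varepsilon)-n(p)=n(q)-n(q-\varepsilon)=\frac{\varepsilon(\beta-\alpha)(\ell_\beta-\ell_\alpha)}{2\ell}$ applied with $n(p)=n(q)=n_0$, whereas you read the same condition off the velocity formula \eqref{f:velocita}. Both are immediate and interchangeable, so the proposal matches the paper's proof in substance and level of detail; the one marginal case you explicitly set aside (an edge lying entirely in a single $\alpha$- or $\beta$-cell, where the candidate field is constant) is indeed degenerate and is likewise passed over in the paper.
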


\begin{proof}
If $n_0=1$ the Cahn--Hoffman vector field $n$ needs to be neither increasing 
near $p$ nor decreasing near $q$. This occurs only if $L$ is the union of three 
consecutive segments
$L= L_\beta \cup L_c \cup  L_\alpha$, with endpoints in $p$, $p+\ell_\beta \in 
\mathcal{I}_{\beta,\alpha}$, $q-\ell_\alpha \in \mathcal{I}_{ 
\beta,\alpha}$, and $q$. If $L_c=\emptyset$, then
the candidate field \eqref{f:CHespl} always 
satisfied the constraint in Definition\ref{d:cali}(ii), proving (i). 

If $L_c\neq\emptyset$, from \eqref{f:CHespl} we get
\[
n(p+\varepsilon)-n(p)=\frac{\varepsilon}{2\ell}(\beta-\alpha)
(\ell_\beta-\ell_\alpha)=n(q)-n(q-\varepsilon),
\]
and hence,
since $n(p)=n(q)=1$, the constraint $|n|\leq 1$ is not satisfied if
$\ell_\alpha \neq \ell_\beta$. Finally,
if  $\ell_\alpha=\ell_\beta$, then, by \eqref{f:conden3}
\[
  n'(x)= 
  \begin{cases}
  \dfrac{\beta-\alpha}{2}
  &  \text{if}\ 
  \gamma_\varepsilon(x)=\alpha , \\[10pt]
  \dfrac{\alpha-\beta}{2}
   & \text{if}\ 
  \gamma_\varepsilon(x)=\beta,
  \end{cases}
\]
and a Canh--Hoffmann vector field with this derivative exists
only if  $\ell_\alpha=\ell_\beta=\varepsilon/2$,
otherwhise
$
 n(p+\ell_\beta+\varepsilon/2)>1.
$
In conclusion, $L$ is calibrable with velocity $v_L=(\alpha+\beta)/2$
if and only if $p,q\in \mathcal{I}_{\alpha, \beta}$.

The case $n_0=-1$ follows 
from similar arguments.
\end{proof}

\begin{prop}\label{p:fracon} 
Let $L$ be an edge with positive $\varphi$--curvature. If either 
\begin{equation}\label{f:fracon}
\ell+\ell_\alpha-\ell_\beta \leq 
\frac{4}{\beta-\alpha}
\end{equation}
or $p\in 
\mathcal{I}_{\beta,\alpha}$, 
$q \in \mathcal{I}_{\alpha,\beta}$, then $L$
%
is calibrable with velocity $v_L$ given by \eqref{f:velocita}.
\end{prop}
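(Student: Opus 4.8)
Since conditions (i) and (iii) of Definition~\ref{d:cali} pin down a unique candidate, namely the continuous, piecewise affine (in particular Lipschitz) function $n$ of \eqref{f:CHespl}, with slopes given by \eqref{f:conden3} and velocity $v_L$ as in \eqref{f:velocita}, and since for $\chi_L=1$ this field automatically satisfies $n(p)=-1$ and $n(q)=1$ by \eqref{e:bcon}, the whole proof reduces to verifying the bound $|n|\le 1$ on $[p,q]$. As a preliminary observation, using Remark~\ref{f:asseps} (so that $\varepsilon(\beta-\alpha)<8$, hence $\ell_\alpha\le\varepsilon/2<4/(\beta-\alpha)$) together with $\ell,\ell_\beta\ge 0$, the slope of $n$ on each $\alpha$--phase, $\frac{1}{2\ell}\bigl(4+(\beta-\alpha)(\ell-\ell_\alpha+\ell_\beta)\bigr)$, is strictly positive.

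\emph{Case 1: \eqref{f:fracon} holds.} By \eqref{f:conden3}, condition \eqref{f:fracon} is precisely the requirement that the slope of $n$ on each $\beta$--phase, $\frac{1}{2\ell}\bigl(4-(\beta-\alpha)(\ell+\ell_\alpha-\ell_\beta)\bigr)$, be nonnegative. Hence $n$ is nondecreasing on $[p,q]$, and from $n(p)=-1$, $n(q)=1$ we get $-1\le n\le 1$; thus $L$ is calibrable with velocity \eqref{f:velocita}.

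\emph{Case 2: $p\in\mathcal{I}_{\beta,\alpha}$ and $q\in\mathcal{I}_{\alpha,\beta}$.} If \eqref{f:fracon} also holds we are back in Case~1, so we may assume it fails; then $n$ has strictly positive slope on the $\alpha$--phases and strictly negative slope on the $\beta$--phases. I would first translate the endpoint conditions into the phase pattern of $\gamma_\varepsilon$: by definition of $\mathcal{I}_{\beta,\alpha}$ and $\mathcal{I}_{\alpha,\beta}$ we have $\gamma_\varepsilon=\alpha$ just to the right of $p$ and just to the left of $q$, so, $\gamma_\varepsilon$ being $\varepsilon/2$--periodically alternating, $L$ decomposes into pieces of length $\varepsilon/2$ in the order $\alpha,\beta,\alpha,\dots,\beta,\alpha$; consequently $\ell=(2m+1)\varepsilon/2$ for some integer $m\ge 0$, and from \eqref{e:pezzetti} one reads $\ell_\alpha=\varepsilon/2$, $\ell_\beta=0$. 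Since $n$ increases on each $\alpha$--piece and decreases on each $\beta$--piece, the maximum of $n$ on $[p,q]$ is attained at $q$ or at one of the interior $\alpha\to\beta$ transitions (the interior points of $\mathcal{I}_{\alpha,\beta}$), and the minimum at $p$ or at one of the interior $\beta\to\alpha$ transitions. A direct substitution in \eqref{f:conden3} shows that the increment of $n$ over one period (an $\alpha$--piece followed by a $\beta$--piece) equals $\delta:=\frac{\varepsilon}{4\ell}\bigl(8-\varepsilon(\beta-\alpha)\bigr)>0$. Hence, reading off $n(q)=1$ from \eqref{f:CHespl}, the interior local maxima form an increasing arithmetic progression of step $\delta$ whose last term, one period before $q$, equals $n(q)-\delta=1-\delta$, so each is of the form $1-k\delta$ with $k\ge 1$, hence $<1$; likewise the interior local minima form an increasing arithmetic progression of step $\delta$ whose first term is $n(p)+\delta>-1$, hence each is $>-1$. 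Therefore $\max_{[p,q]}n=n(q)=1$ and $\min_{[p,q]}n=n(p)=-1$, so $|n|\le 1$ and $L$ is calibrable with velocity \eqref{f:velocita}.

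The only genuinely delicate point is the bookkeeping of Case~2: correctly deducing the alternating phase structure of $L$ (and thus the values $\ell_\alpha=\varepsilon/2$, $\ell_\beta=0$) from the membership of the endpoints in $\mathcal{I}_{\beta,\alpha}$ and $\mathcal{I}_{\alpha,\beta}$, and then checking that the local extrema of the resulting sawtooth profile of $n$ align in arithmetic progressions with extreme values exactly $\pm 1$; the rest is a direct computation with \eqref{f:conden3}--\eqref{f:velocita} and an appeal to the standing smallness assumption on $\varepsilon$ to fix the relevant signs.
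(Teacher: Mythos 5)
Your proof is correct and follows essentially the same route as the paper: both handle \eqref{f:fracon} by monotonicity of the candidate field, and in the remaining case both reduce to the alternating $\alpha,\beta,\dots,\alpha$ phase structure with $\ell_\alpha=\varepsilon/2$, $\ell_\beta=0$, and use the positive per-period increment $\frac{\varepsilon}{4\ell}\bigl(8-(\beta-\alpha)\varepsilon\bigr)$ to control the sawtooth extrema. You simply make explicit the arithmetic-progression bookkeeping that the paper compresses into ``Similarly, we obtain that $n$ satisfies the constraint in $[q-\varepsilon,q]$, and hence $|n|\le 1$.''
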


\begin{proof}
Under the assumption \eqref{f:fracon}, the candidate Cahn--Hoffmann vector 
field \eqref{f:CHespl} is an increasing function in $[p,q]$. Hence the 
constraint in Definition \ref{d:cali}(ii) is fulfilled, and $L$ is calibrable.

Assume now that $p\in 
\mathcal{I}_{\beta,\alpha}$, and 
$q \in \mathcal{I}_{\alpha,\beta}$. Then
the edge has $n(p)=-1$, $n(q)=1$,
$\ell_\alpha=\varepsilon/2$, and $\ell_\beta=0$. Then, by \eqref{f:conden3}, 
the candidate Cahn-Hoffmann field \eqref{f:CHespl} is increasing in 
$[p,p+\varepsilon/2]$, and, by Remark \ref{f:asseps}, 
\[
n(p+\varepsilon)-n(p)=\frac{\varepsilon}{4\ell} 
\left(8-(\beta-\alpha)\varepsilon\right) >0.
\]
Similarly, we obtain that $n$ satisfies the constraint in 
$[q-\varepsilon,q]$, and hence
$|n|\leq 1$ on $L$, so that $L$ is calibrable 
with velocity $v_L$.
\end{proof}

\begin{rk}\label{velfrac}
Notice that, if $L$ satisfies the condition \eqref{f:fracon}, then
\[
v_L\geq \frac{2}{\ell}+\frac{\alpha+\beta}{2}+ 
+\frac{\beta-\alpha}{2\ell}
\left(\ell-\frac{4}{\beta-\alpha}\right)=\beta>0
\]
\end{rk}



\begin{prop}\label{p:gencalibr}
Let $L$ be an edge with positive 
$\varphi$--curvature, and such that $\ell+\ell_\alpha-\ell_\beta > 
4/(\beta-\alpha)$. Then the following hold.
\begin{itemize}
\item[(i)] If either $\gamma_\varepsilon (p)=\beta$, or $\gamma_\varepsilon 
(q)=\beta$, or $p\in \mathcal{I}_{\alpha,\beta}$, or
$q\in \mathcal{I}_{\beta,\alpha}$, 
then $L$ is not calibrable.
\item[(ii)] If $\gamma_\varepsilon (p)=\gamma_\varepsilon (q) =\alpha$, let 
$\sigma_1$, $\sigma_2\in (0, \varepsilon/2)$ be such that 
$p+\varepsilon/2+\sigma_1 \in \mathcal{I}_{\beta,\alpha}$ and 
$q-\varepsilon/2-\sigma_2 \in  \mathcal{I}_{\alpha,\beta}$, and 
let $\tilde{\ell}$ be the length of the interval $[p+\varepsilon/2+\sigma_1,
q-\varepsilon/2-\sigma_2]$.
Setting 
\[
m= \varepsilon 
\frac{\beta-\alpha}{(\beta-\alpha)(\tilde{\ell}+\varepsilon/2)+4},
\qquad 
h=\frac{\varepsilon}{2}\frac{(\beta-\alpha)(\tilde{\ell}+\varepsilon/2)-4}
{(\beta-\alpha)(\tilde{\ell}+\varepsilon/2)+4},
\] and  
\[
\Sigma=\left\{m\sigma_2 +h \leq  
\sigma_1 \leq 
\frac{1}{m} \sigma_2- \frac{h}{m}\right\},
\]
we have $m\in (0,1)$, $\Sigma \cap [0,\varepsilon/2]^2 \neq \emptyset$, and 
$L$ is calibrable with velocity
\[
v_L=\frac{2}{\ell}+\frac{\alpha+\beta}{2}+\frac{\beta-\alpha}{2\ell}
\left(\frac{\varepsilon}{2}-\sigma_1-\sigma_2\right)
\]
if and only if $(\sigma_1,\sigma_2)\in \Sigma$.
\item[(iii)] if $\gamma_\varepsilon(p)=\alpha$, and $q\in  
\mathcal{I}_{\alpha,\beta}$ (resp. $p\in \mathcal{I}_{\beta, 
\alpha}$, and $\gamma_\varepsilon(q)=\alpha$), let $\sigma\in (0,\varepsilon/2)$
be such that $p+\sigma+\varepsilon/2\in \mathcal{I}_{\beta,\alpha}$
(resp. $q-\sigma-\varepsilon/2\in \mathcal{I}_{\alpha,\beta}$),
let $\ell^*$ be the length of the interval $[p+\varepsilon/2+\sigma, q]$ (
resp. of $[p, q-\varepsilon/2-\sigma]$), and let
\[
\sigma^*= \frac{\varepsilon}{2}\frac{(\beta-\alpha)(\ell^*+\varepsilon/2)-4}
{(\beta-\alpha)(\ell^*-\varepsilon/2)+4}.
\]
Then $L$ is calibrable if and only if $\sigma \geq \sigma^*$.
\end{itemize}
\end{prop}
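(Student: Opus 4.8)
The plan is to analyze the candidate Cahn--Hoffmann field \eqref{f:CHespl} on the relevant segment and to check when the constraint $|n|\leq 1$ from Definition \ref{d:cali}(ii) can be satisfied, using the fact that everything else (the slopes \eqref{f:conden3} and the boundary values \eqref{e:bcon}) is already forced. Since $L$ has positive $\varphi$--curvature, $\chi_L=1$, so $n(p)=-1$ and $n(q)=1$, and by Remark \ref{f:asseps} the curvature term $2/\ell$ dominates the perturbation; the hypothesis $\ell+\ell_\alpha-\ell_\beta>4/(\beta-\alpha)$ means precisely that, by \eqref{f:conden3}, the slope of $n$ on the $\beta$--phase is \emph{negative}, while on the $\alpha$--phase it is positive. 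Hence $n$ is a piecewise affine ``sawtooth'': it rises on $\alpha$--intervals and falls on $\beta$--intervals, so its local maxima occur at right endpoints of $\alpha$--intervals (points of $\mathcal{I}_{\alpha,\beta}$) and its local minima at right endpoints of $\beta$--intervals (points of $\mathcal{I}_{\beta,\alpha}$).

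For part (i), I would argue that each listed hypothesis forces $n$ to exceed $1$ near $p$ or near $q$. If $\gamma_\varepsilon(p)=\beta$ then $n$ is decreasing immediately to the right of $p=$ left endpoint, where $n(p)=-1$ already; decreasing past $-1$ violates $|n|\le 1$ — more precisely, one computes as in Proposition \ref{r:hcurvz} that the first full period forces $n$ above $1$ somewhere, since the net rise over one period is $\tfrac{\varepsilon}{2\ell}(\beta-\alpha)(\ell_\beta-\ell_\alpha)$, which combined with the sign of the slopes and the endpoint constraint is incompatible. The cleanest route: the net increment $n(q)-n(p)=2$ must be achieved, but since $n$ falls on $\beta$--pieces, the sum of the rises on $\alpha$--pieces must exceed $2$, and tracking the running maximum shows it overshoots $1$ unless the phase pattern at the two ends is exactly the ``good'' one, namely $\gamma_\varepsilon(p)=\gamma_\varepsilon(q)=\alpha$ with the first jump to $\beta$ and the last jump from $\beta$ not occurring immediately — i.e. the configuration of (ii) or (iii). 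I would do this by an explicit but short telescoping estimate on $n$ evaluated at the jump points, exactly in the spirit of the computation $n(p+\varepsilon)-n(p)=\cdots$ in the proof of Proposition \ref{p:fracon}.

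For parts (ii) and (iii), the strategy is to write down $n$ explicitly via \eqref{f:CHespl} on the decomposition of $L$ into its $\alpha$-- and $\beta$--pieces, identify the two ``dangerous'' values of $n$ — the global maximum, attained at the $\mathcal{I}_{\alpha,\beta}$--point just inside $q$ (or the symmetric point), and the global minimum just inside $p$ — and impose $n\le 1$ and $n\ge -1$ there. Because $n$ depends affinely on the parameters $\sigma_1,\sigma_2$ (resp. $\sigma$) once $\tilde\ell$ (resp. $\ell^*$) is fixed, and the velocity $v_L$ from \eqref{f:velocita} is itself affine in $\sigma_1+\sigma_2$, each of the two constraints $|n|\le 1$ becomes a linear inequality in $(\sigma_1,\sigma_2)$; one of them reads $\sigma_1\ge m\sigma_2+h$ and the other $\sigma_1\le \tfrac1m\sigma_2-\tfrac hm$, which is exactly the region $\Sigma$. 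The verifications that $m\in(0,1)$ and $\Sigma\cap[0,\varepsilon/2]^2\neq\emptyset$ reduce to checking the hypothesis $(\beta-\alpha)(\tilde\ell+\varepsilon/2)>4$ (which follows from $\ell+\ell_\alpha-\ell_\beta>4/(\beta-\alpha)$ after expressing $\ell_\alpha,\ell_\beta$ in terms of $\tilde\ell,\sigma_1,\sigma_2$) and evaluating the defining inequalities at, say, $\sigma_2=\varepsilon/2$; part (iii) is the degenerate case where one of the two slack parameters is pinned to $\varepsilon/2$, collapsing the strip $\Sigma$ to the single inequality $\sigma\ge\sigma^*$.

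The main obstacle I anticipate is bookkeeping rather than conceptual: one must correctly locate the point at which $n$ attains its maximum (it is \emph{not} an endpoint of $L$ but the first jump point of type $\mathcal{I}_{\alpha,\beta}$ counted from $q$, because after that point $n$ only decreases, since the last full periods all end on a falling $\beta$--piece) and its minimum, and then carry the affine dependence on $\sigma_1,\sigma_2$ through \eqref{f:CHespl} and \eqref{f:velocita} without sign errors, so that the two inequalities assemble into the stated $\Sigma$. A secondary subtlety is that one must check no \emph{intermediate} local extremum is worse than these two extreme ones; this follows because between two consecutive $\mathcal{I}_{\alpha,\beta}$--points the net change of $n$ over a full period has constant sign (determined by $\mathrm{sgn}(\ell_\beta-\ell_\alpha)$, hence by $\mathrm{sgn}(\sigma_1-\sigma_2)$ up to the fixed offset), so the running maximum and minimum are monotone and the extremes are indeed attained at the claimed endpoints.
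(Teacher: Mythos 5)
Your approach is essentially the same as the paper's: reduce calibrability to the constraint $|n|\leq 1$ at the two extremal local extrema of the candidate field \eqref{f:CHespl} (the first $\mathcal{I}_{\beta,\alpha}$--point from $p$ and the last $\mathcal{I}_{\alpha,\beta}$--point before $q$), then translate these into the two affine inequalities defining $\Sigma$. Your treatment of (i) and (iii) also matches the paper's.

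However, your justification that these two extrema are the binding ones contains an error. You claim the per-period increment of $n$ has sign ``determined by $\mathrm{sgn}(\ell_\beta-\ell_\alpha)$'', and you later quote the formula $\tfrac{\varepsilon}{2\ell}(\beta-\alpha)(\ell_\beta-\ell_\alpha)$ for the net rise over a period. That formula is the one for $\chi_L=0$ (it appears in the proof of Proposition \ref{r:hcurvz}). Here $\chi_L=1$, and from \eqref{f:conden3} the correct per-period increment is
\[
\frac{\varepsilon}{2}\bigl(n'_\alpha+n'_\beta\bigr)=\frac{\varepsilon}{2\ell}\Bigl(4+(\beta-\alpha)(\ell_\beta-\ell_\alpha)\Bigr),
\]
which is always \emph{positive}: by Remark \ref{f:asseps} one has $(\beta-\alpha)\varepsilon<8$, so $(\beta-\alpha)|\ell_\beta-\ell_\alpha|\le(\beta-\alpha)\varepsilon/2<4$. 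This positivity (coming from the curvature term $4\chi_L$, not from $\mathrm{sgn}(\ell_\beta-\ell_\alpha)$) is precisely what makes the sequences of local maxima and local minima increasing from $p$ to $q$, and hence places the global minimum of $n$ at $p+\sigma_1+\varepsilon/2$ and the global maximum at $q-\sigma_2-\varepsilon/2$. With your stated sign rule the argument would be ambiguous whenever $\ell_\beta-\ell_\alpha$ changes sign, so this is a real gap in the justification, even though the final conclusion and the assembled region $\Sigma$ are correct. Once this is fixed, your derivation of the two affine inequalities $\sigma_1\geq m\sigma_2+h$ and $\sigma_2\geq m\sigma_1+h$ matches the paper's (the paper's displayed system in its proof of (ii) actually has the wrong sign in the second inequality, $n(q-\varepsilon/2-\sigma_2)-n(q)\geq 0$ instead of $\leq 0$; the constraint that the last local maximum not exceed $n(q)=1$ is the one you correctly identify).
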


\begin{proof}
If $\ell+\ell_\alpha-\ell_\beta > 4/(\beta-\alpha)$, by \eqref{f:conden3}
the candidate Cahn--Hoffmann field $n$ is strictly decreasing in the $\beta$ 
phase. Hence, under the assumptions in (i), 
$n$ does not satisfy the constraint $|n|\leq 1$ at least near an 
endpoint, and $L$ is not 
calibrable.

If both the endpoints belong to the $\alpha$ phase, then the requirement
$(\sigma_1,\sigma_2)\in\Sigma$ is equivalent to the conditions
\[
\begin{cases}
n(p+\varepsilon/2+\sigma_1)-n(p)\geq 0,\\
n(q-\varepsilon/2-\sigma_2)-n(q)\geq 0
\end{cases}
\]
that guarantee the calibrability of the edge.

Setting
\begin{equation}\label{f:tildesig}
\tilde{\sigma}:=
\frac{\varepsilon}{2}\frac{(\beta-\alpha)(\tilde{\ell}+\varepsilon/2)-4}
{(\beta-\alpha)(\tilde{\ell}-\varepsilon/2)+4},
\end{equation}
we have that $(\tilde{\sigma},\tilde{\sigma})\in \Sigma$,
and $\tilde{\sigma}\in (0,\varepsilon/2)$ under the assumption 
$\ell+\ell_\alpha-\ell_\beta > 4/(\beta-\alpha)$.

The proof of (iii) follows the same arguments. 
\end{proof}

\begin{rk}[Calibrability threshold]\label{r:symm}
In the special case when $\sigma_1=\sigma_2=\sigma>0$, the calibrability 
condition stated in Proposition 
\ref{p:gencalibr}(ii) reduces to the unilateral constraint
$\sigma\geq \tilde{\sigma}$, where $\tilde{\sigma}$ is the value defined in 
\eqref{f:tildesig}.
Hence $L$ is calibrable if and only if $\sigma\geq \tilde{\sigma}$.
Moreover, if $\sigma=\tilde{\sigma}$, the edge $L$ is calibrated by a 
Cahn--Hoffmann
vector field $n$ such that $n(p)=n(p+\varepsilon/2+\tilde{\sigma})$ and 
$n(q)=n(q-\varepsilon/2-\tilde{\sigma})$. As a consequence,
the same field calibrates both the edges 
$[p,p+\varepsilon/2+\tilde{\sigma}]\times 
\{\overline{y}\}$
and $[q-\varepsilon/2-\tilde{\sigma},q]\times \{\overline{y}\}$
(as edges with zero $\varphi$--curvature, see Proposition \ref{r:hcurvz}),
and the edge 
$[p+\varepsilon/2+\tilde{\sigma},q-\varepsilon/2-\tilde{\sigma}]\times 
\{\overline{y}\}$ (as edges with positive $\varphi$--curvature, see Proposition 
\ref{p:fracon}) with the same velocity.

Similarly, in the case (iii) Proposition \ref{p:gencalibr}, when 
$\sigma=\sigma^*$, the edge $L$ is calibrated by a Cahn--Hoffmann
vector field $n$ such that $n(p)=n(p+\varepsilon/2+\sigma^*)$, and 
the same field calibrates both the edge 
$[p,p+\varepsilon/2+\sigma^*]\times 
\{\overline{y}\}$
(as edge with zero $\varphi$--curvature),
and the edge 
$[p+\varepsilon/2+\tilde{\sigma},q]\times 
\{\overline{y}\}$ (as edges with positive $\varphi$--curvature) with the same 
velocity.
\end{rk}

\section{Forced crystalline flows and their effective motion}\label{s:motion}

The results of Section \ref{s:calibr} prescribe a velocity to every 
calibrable edge
not lying on a discontinuity line of $g_\eps$, and suggest that the forced 
crystalline  
curvature flow starting from a coordinate polyrectangle (that is a set 
whose boundary is a closed polygonal curve with edges parallel to the 
coordinate axes) remains a coordinate polyrectangle, whose structure changes
when either existing edges disappear by the growth of their 
neighbors, 
or new edges are generated by the splitting of no longer calibrable edges.

In every time interval between these events, the motion is determined 
by a system of ODEs, and hence the behavior of the evolution 
on the discontinuities can be described using  the general theory of 
differential equations with discontinuous 
right--hand side \cite{Fi}.

Concerning the changes of geometry, it is clear what is meant by
``disappearing edges'', that is edges whose length becomes zero in finite 
time, but the notion of ``appearing edges'', that is how a no longer 
calibrable edge breaks, has to be specified. 

\smallskip

We focus our attention to 
coordinate polyrectangles whose edges have non--negative $\varphi$--curvature.
In the sequel we will use the abuse of notation $L=[p,q]$ when
the edge $L$ is of the form $L=[p,q]\times \{y\}$ or $L=\{y\} \times [p,q]$,
and $n(p)$, $n(q)$ will denote the prescribed values of the Cahn--Hoffman 
vector field at the endpoints of $L$. 

\begin{Definition}[Cracking multiplicity and set--up]\label{d:crsu}

If $L$ is an edge not lying on a discontinuity line of $g_\eps$,
let us define
\[
L^c:=\sup \{\tilde{L}\subseteq L\colon \ \tilde{L}=[s_1,s_2], \ n(s_1)=n(p), 
n(s_2)=n(q),\ \tilde{L}
\ \text{calibrable}\} =[p_b,q_b],
\]
and let us denote by $L^-=[p,p_b]$, $L^+=[q_b,q]$, with the convention
$L^-=\emptyset$ (resp. $L^+=\emptyset$) if $p=p_b$ (resp. $q=q_b$).  
The {\em craking multiplicity}
$M(L)$ assigned to $L$ is given by 
\[
M(L):= 
\begin{cases}
1, & \text{if $L=L^c$}, \\
3, & \text{if $L\neq L^c$, and either $L^-= \emptyset$, or $L^+=\emptyset$, } \\
5,  & \text{if  $L^-\neq  \emptyset$, and $L^+\neq \emptyset$. }
\end{cases}
\]
The points $p_b$, $q_b$, if different from the endpoints of $L$, are said
{\em breaking points} of $L$, and $\mathcal{C}(L):=\{p,p_b,q_b,q\}$
is the {\em craking set--up} of $L$.

\smallskip

For every edge $L$ lying on a discontinuity line of $g_\eps$, with (inner) 
normal $\nu(L)$, consider the values 
\begin{equation}\label{f:inoutv}
v^{in}_L=\chi_{L}\frac{2}{\ell}+ 
\frac{1}{\ell} 
\int_{L+\frac{\eps}{4}\nu(L)}
g_\eps, \qquad
v^{out}_L=\chi_{L}\frac{2}{\ell}+ 
\frac{1}{\ell} 
\int_{L-\frac{\eps}{4}\nu(L)}
g_\eps.
\end{equation} 
If  $v_L^{in}> 0$ and $v_L^{out}<0 $, then we set 
$M(L)\in \{1, M(L+\frac{\eps}{4}\nu(L)),M(L-\frac{\eps}{4}\nu(L))$. Otherwise,
the craking multiplicity $M(L)$ assigned to $L$ is given by
\[
M(L)= 
\begin{cases}
M(L+\frac{\eps}{4}\nu(L)) & \text{if $v_L^{in}> 0$ and $v_L^{out}\geq   0$} \\
M(L-\frac{\eps}{4}\nu(L))  & \text{if $v_L^{in}\leq 0$ and $v_L^{out}<   0$} \\
1  & \text{if $v_L^{in}\leq 0$ and $v_L^{out}\geq   0$} 
\end{cases}
\]
When $M(L)=M(L\pm\frac{\eps}{4}\nu(L))$, the cracking set--up of $L$
is set as $\mathcal{C}(L):= \mathcal{C}(L\pm\frac{\eps}{4}\nu(L))$.
\end{Definition}

\begin{prop}\label{r:breakablepos}
Let $L=[p,q]$ be an edge not lying on a discontinuity line of $g_\eps$,
and let $\mathcal{C}(L):=\{p,p_b,q_b,q\}$ the cracking set--up of $L$. Then
$L^-$ and $L^+$ are either empty, or calibrable as edges with zero 
$\varphi$--curvature and prescribed Cahn--Hoffman vector field $n(p)=n(p_b)$, 
and $n(q_b)=n(q)$, respectively.
Moreover, denoting by $v^\pm, v^c$ the velocities of $L^\pm,L^c$ 
respectively, then 
\begin{itemize}
\item[(i)] $p_b\in \mathcal{I}_{\beta,\alpha}$ , 
$q_b \in \mathcal{I}_{\alpha,\beta,}$, and $v^\pm>v^c$, if $L$ has positive 
$\varphi$--curvature;
\item[(ii)] $p_b,q_b\in \mathcal{I}_{\beta,\alpha}$, and $v^->v^c>v^+$, if $L$ 
has zero $\varphi$--curvature, 
and $n(p)=-1$;
\item[(iii)] $p_b,q_b\in \mathcal{I}_{\alpha,\beta}$, and $v^-<v^c<v^+$, if $L$ 
has 
zero $\varphi$--curvature, 
and $n(p)=1$;
\end{itemize}
\end{prop}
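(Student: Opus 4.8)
The plan is to analyze the candidate Cahn--Hoffmann field \eqref{f:CHespl} associated to $L$, and to locate precisely where its monotonicity changes, since by \eqref{f:conden3} the sign of $n'$ in each phase of $\gamma_\eps$ is governed by the quantities $\ell$, $\ell_\alpha$, $\ell_\beta$ and the convexity factor $\chi_L$. First I would recall from the calibrability analysis of Section \ref{s:calibr} that a maximal subsegment $\tilde L=[s_1,s_2]$ of $L$ is calibrable with the prescribed boundary values precisely when the candidate field stays in $[-1,1]$; this makes $L^c=[p_b,q_b]$ well defined as the ``calibrable core'' of $L$. The key observation is that the extremal subsegment $L^c$ must have its endpoints at discontinuity points of $\gamma_\eps$ of a specific type, because otherwise one could enlarge $\tilde L$ slightly without violating the constraint $|n|\le1$: this is where the case distinction between $\mathcal I_{\beta,\alpha}$ and $\mathcal I_{\alpha,\beta}$ enters, via the sign of $n'$ just inside and just outside $[p_b,q_b]$.

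Concretely, for the positive $\varphi$--curvature case (item (i)), the field $n$ is increasing in the $\alpha$ phase and decreasing in the $\beta$ phase when $\ell+\ell_\alpha-\ell_\beta>4/(\beta-\alpha)$ (by \eqref{f:conden3} and Remark \ref{f:asseps}); so the only way the core can fail to extend to the left is that $n(p_b)$ has just reached the value $-1$ coming from a $\beta$ phase, forcing $p_b\in\mathcal I_{\beta,\alpha}$, and symmetrically $q_b\in\mathcal I_{\alpha,\beta}$. Then $L^-=[p,p_b]$ and $L^+=[q_b,q]$ are short edges of length $<\varepsilon$ whose phase structure at their endpoints matches exactly case (ia)/(ib) of Proposition \ref{r:hcurvz}, so they are calibrable as zero--curvature edges; their velocities $v^\pm=(\alpha\ell_\alpha+\beta\ell_\beta)/(\ell_\alpha+\ell_\beta)$ are easily compared with $v^c$ given by \eqref{f:velocita} — the inequality $v^\pm>v^c$ reduces to an elementary estimate using the positivity of the curvature contribution $2/\ell$ and the structure of the $\alpha$, $\beta$ fractions, together with Remark \ref{velfrac}.

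For the zero $\varphi$--curvature cases (items (ii) and (iii)), I would argue similarly but now $\chi_L=0$, so by \eqref{f:conden3} the sign of $n'$ in each phase is dictated solely by $\ell_\beta-\ell_\alpha$, and the value $n_0=n(p)=n(q)\in\{\pm1\}$ at both endpoints forces a definite orientation. When $n_0=-1$ the core cannot extend left unless $n$ has just climbed back to $-1$, which pins $p_b$ (and by the symmetric argument at the right end, also $q_b$) to $\mathcal I_{\beta,\alpha}$; the dual situation with $n_0=1$ gives $p_b,q_b\in\mathcal I_{\alpha,\beta}$. The velocity ordering $v^->v^c>v^+$ (resp. $v^-<v^c<v^+$) then follows by writing all three velocities through \eqref{f:velocita}--\eqref{f:CHespl} in terms of the local $\alpha$/$\beta$ lengths and comparing; since in the zero--curvature case there is no $2/\ell$ term to dominate, the ordering is the more delicate point and should be checked by a direct computation of $v^--v^c$ and $v^c-v^+$ using the explicit lengths of $L^-$, $L^c$, $L^+$.

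I expect the main obstacle to be the bookkeeping in identifying which discontinuity type ($\mathcal I_{\beta,\alpha}$ versus $\mathcal I_{\alpha,\beta}$) the breaking points $p_b,q_b$ land on: this requires keeping careful track of the direction in which $L$ is traversed, the sign convention for $\chi_L$ and the prescribed endpoint values \eqref{e:bcon}, and matching them against the phase of $\gamma_\eps$ immediately adjacent to $p_b$ and $q_b$. Once the positions of $p_b,q_b$ are correctly pinned down, the calibrability of $L^\pm$ is an immediate citation of Proposition \ref{r:hcurvz}, and the velocity comparisons are routine though slightly tedious algebra that I would not spell out in full.
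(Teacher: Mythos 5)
Your identification of the breaking points $p_b,q_b$ via the sign of $n'$ in each phase of $\gamma_\varepsilon$, and the reduction to Proposition~\ref{r:hcurvz} for the calibrability of $L^\pm$, track the paper's proof exactly; the same decomposition and the same citations are used there. The genuine gap is in the velocity comparison for the positive--curvature case (item~(i)).

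You claim that $v^\pm>v^c$ ``reduces to an elementary estimate using the positivity of the curvature contribution $2/\ell$.'' This has the sign backwards: it is $v^c$, the velocity of the calibrable core $L^c$ (which still has positive $\varphi$--curvature), that carries the positive term $2/\tilde\ell$, whereas $v^\pm$ are velocities of zero--curvature edges and carry no such term. The curvature contribution therefore tends to \emph{raise} $v^c$ relative to $v^\pm$, working against the inequality you want, not for it. The paper's actual argument in the subcase $\gamma_\varepsilon(p)=\alpha$ writes $v^-=v(\sigma)=\dfrac{\sigma\alpha+\frac{\varepsilon}{2}\beta}{\sigma+\frac{\varepsilon}{2}}$, with $\sigma$ the length of the initial $\alpha$--phase of $L^-$, and then invokes the structural identity $v(\sigma_0)=v^c$ at the calibrability threshold $\sigma_0$ established in Remark~\ref{r:symm}. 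Non--calibrability of $L$ forces $\sigma<\sigma_0$, and $v^->v^c$ follows from the strict monotonicity of $v(\sigma)$. Without this threshold identity, which your proposal never mentions, the comparison does not follow from an ``elementary estimate'' and in fact requires controlling the size of $\tilde\ell$, which is not directly accessible from the non--calibrability of $L$. (In the remaining subcases $\gamma_\varepsilon(p)=\beta$ or $p\in\mathcal{I}_{\alpha,\beta}$, where $v^-=\beta$, the inequality $v^c<\beta$ does come out of Remark~\ref{velfrac}, but only after reading off the explicit $\ell_\alpha=\varepsilon/2$, $\ell_\beta=0$ decomposition of $L^c$, not from a sign of the curvature term.)

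For the zero--curvature items~(ii) and~(iii) your plan is sound and matches the paper: $v^c=(\alpha+\beta)/2$ exactly, $v^\pm$ are weighted means of $\alpha$ and $\beta$, and the strict inequalities come from the fact that the half--period of $\gamma_\varepsilon$ adjacent to $p_b$ (resp.\ $q_b$) on the $L^-$ (resp.\ $L^+$) side is forced to be the $\beta$ (resp.\ $\alpha$) phase when $n_0=-1$, and vice versa when $n_0=1$, so the weights are automatically unbalanced in the correct direction. Contrary to your remark, this is actually the straightforward case; the positive--curvature comparison is the one that genuinely needs Remark~\ref{r:symm}.
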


\begin{proof}
Let $L=[p,q]$ be an edge with positive 
$\varphi$--curvature. By Propositions \ref{p:fracon} and \ref{p:gencalibr},
if $L^c \neq L$ then $\ell+\ell_\alpha-\ell_\beta > 
4/(\beta-\alpha)$, and 
\[
p_b=\min\{s\in [p,q]\cap \mathcal{I}_{\beta,\alpha}\}, \qquad
q_b=\max\{s\in [p,q]\cap \mathcal{I}_{\alpha, \beta}\},
\]
Moreover, by Proposition \ref{r:hcurvz}, $L^-= [p,p_b]\subseteq L$
is either empty or calibrable
as edge with zero $\varphi$--curvature and prescribed Cahn--Hoffmann 
$n_0=-1$ at the endpoints. Similarly
$L^+= [q_b,q]\subseteq L$ is either empty  or calibrable
as edge with zero $\varphi$--curvature and prescribed Cahn--Hoffmann 
$n_0=-1$ at the endpoints.

Concerning the velocities, assume that $p\neq p_b$. If either
$\gamma_\eps(p)=\beta$ or $p\in \mathcal{I}_{\alpha,\beta}$, we have 
$v^-=\beta > v^c$, by Remark \ref{velfrac}. If $\gamma_\eps(p)=\alpha$,
then, by Proposition \ref{r:hcurvz}(i), 
\[
v^-=v(\sigma)=\frac{ \sigma \alpha +\frac{\eps}{2} \beta}{ \sigma 
+\frac{\eps}{2}} 
\]
with $0<\sigma<\sigma_0$, where $\sigma_0$ is such that
$v(\sigma_0)=v^c$ (see also Remark \ref{r:symm}). The inequality
$v^->v^c$ then follows from the fact that the function
$v(\sigma)$ is strictly monotone decreasing. The same arguments
can be used for $v^+$ in the case $q_b\neq q$. 

\smallskip

Recalling Proposition \ref{r:hcurvz}, we can perform a similar
splitting for edges with zero $\varphi$--curvature. If $q-p\geq \eps$, then, 
if $n(p)=n(q)=1$, we have
\[
p_b=\min\{s\in [p,q]\cap \mathcal{I}_{\alpha,\beta}\}, \qquad
q_b=\max\{s\in [p,q]\cap \mathcal{I}_{\alpha, \beta}\}, 
\]
while, if $n(p)=n(q)=-1$, we have
\[
p_b=\min\{s\in [p,q]\cap \mathcal{I}_{\beta, \alpha}\}, \qquad
q_b=\max\{s\in [p,q]\cap \mathcal{I}_{\beta, \alpha}\}.
\]
In both cases the remaining parts $L^\pm$ are either empty or calibrable 
with velocities
\[
v^\pm= \frac{\sigma_\alpha^\pm \alpha+\sigma_\beta^\pm \beta}{\sigma_\alpha^\pm 
+\sigma_\beta^\pm }, 
\]
for suitable $\sigma_\alpha^\pm,\sigma_\beta^\pm \in [0, \eps/2]$. 
Since $v^c=\frac{\alpha+\beta}{2}$, the strict inequalities in 
(ii) and (iii) hold true.

The case of $L$ with zero $\varphi$--curvature and $q-p< \eps$ is similar and 
is left to the reader. We would like to stress that the edges of this type
appearing as $L^\pm$, due to the cracking set--up, are always calibrable.
\end{proof}
   

\begin{Definition}[Breaking configuration]
Let $E$ be a coordinate polyrectangle whose edges
$\tilde{L}_1,\ldots,\tilde{L}_n$ have non--negative $\varphi$--curvature.  
For every $i=1,\ldots,n$, let
$\mathcal{C}(L_i)=\{p_i,p_{i,b},q_{i,b},q_{i}\}$ be a cracking set--up of the 
edge $\tilde{L}_i$. 
The {\em breaking configuration} of $\partial E$ associated to 
$\{\mathcal{C}(\tilde{L}_i)\}_{i=1}^n$ is given by $L_1,\ldots,L_m$, $m= 
\sum_{i=1}^n M(\tilde{L}_i)$, where $L_j$ is either a part of and edge
$\tilde{L}_i$ obtained  by the splitting procedure in 
Definition \ref{d:crsu}, or a degenerate segment with legth zero at a
cracking point $p_{i,b}$ or $q_{i,b}$. 
\end{Definition}

\begin{rk}\label{r:bcuniq}
By Proposition \ref{r:breakablepos}, a breaking configuration of the 
boundary of a coordinate polyrectagle $E$, whose edges have non--negative 
$\varphi$--curvature always exists.
Moreover,  
since the cracking multiplicity and set--up of an edge $L$ is unambiguous 
except in 
the case 
$v_L^{in}> 0$ and 
$v_L^{out}<0 $, the breaking configuration  
is unique, provided that no edge $L\subseteq \partial E$ has $v_L^{in}> 0$ and 
$v_L^{out}<0 $.
\end{rk}

The following result shows that, in our setting, the evolution is well posed.

\begin{prop}\label{p:pinning}
Let $E$ be a coordinate polyrectangle whose edges have non--negative 
$\varphi$--curvature, and let $L_1,\ldots,L_m$ a breaking 
configuration of $\partial E$.
Then there exists $T>0$ and a  family $E(t)$, $t\in [0,T]$ of
coordinate polyrectangles with edges
$L_1(t),\ldots,L_{m}(t)$ which is a forced crystalline flow 
starting from $E$. If, in addition, every $L_i$ with positive length
and
lying on a 
discontinuity line of $g_\eps$ satisfies
one of the following properties
\begin{itemize}
\item[(1)] $v^{in}_{L_i}< 0$, and $v^{out}_{L_i}>0$,
\item[(2)] $v^{in}_{L_i}> 0$ and  $v^{out}_{L_i} > 0$,
\item[(3)] $v^{in}_{L_i}< 0$ and  $v^{out}_{L_i} < 0$,
\end{itemize}
then the 
evolution is unique.
Moreover if $L_i$ satisfies condition (1), 
then $L_i(t)$ is pinned until $v^{in}_{L_i(t)}\leq  0$ and 
$v^{out}_{L_i(t)}\geq 0$.
\end{prop}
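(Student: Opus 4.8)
The plan is to construct the evolution by solving, in each phase of constant geometry, the finite system of ODEs for the endpoints of the edges $L_i(t)$, using the velocities prescribed by the calibrability analysis of Section \ref{s:calibr}, and to handle the discontinuity lines of $g_\eps$ via Filippov's theory. First I would take the breaking configuration $L_1,\dots,L_m$ of $\partial E$ and set, as initial datum, the Cahn--Hoffmann field $n(0,\cdot)$ which on each $L_i$ is the calibrating field from Proposition \ref{r:hcurvz} or Proposition \ref{p:fracon} (Proposition \ref{r:breakablepos} guarantees each piece is calibrable with the prescribed boundary values). The endpoints of the edges then move according to the velocities \eqref{f:velocita} (for edges off the discontinuity set) or, for an edge lying on a discontinuity line, according to the Filippov selection dictated by the signs of $v^{in}_{L_i}$ and $v^{out}_{L_i}$ as in Definition \ref{d:crsu}: if $v^{in}_{L_i}>0$ the edge is pushed off the line into the inner slab and moves with $v^{in}$; if $v^{out}_{L_i}<0$ it moves with $v^{out}$; if $v^{in}_{L_i}\le 0\le v^{out}_{L_i}$ the edge stays on the line with zero velocity. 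This yields a Lipschitz system of ODEs for the vertex coordinates with a right--hand side which is piecewise smooth and, by Remark \ref{r:bcuniq} plus conditions (1)--(3), has a well--defined Filippov solution on a maximal interval; standard ODE theory gives existence on some $[0,T]$, and uniqueness when (1)--(3) hold because then the right--hand side is either Lipschitz or satisfies the one--sided Filippov uniqueness condition (transversal approach to, or stable sliding on, the discontinuity manifold).

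The second step is to verify that the resulting family $E(t)$ satisfies Definition \ref{d:flow}. Conditions (i) and (ii) are immediate since $E(t)$ stays a coordinate polyrectangle with boundary moving Lipschitz--continuously in $t$, so the signed distance $d(\xi,t)$ is locally Lipschitz in a neighbourhood of the moving boundary; here one must only check that no two non--adjacent edges collide and no edge length hits zero before time $T$, which is ensured by taking $T$ small. For (iii) and (iv) I would glue together, over the edges, the calibrating fields provided by the Propositions of Section \ref{s:calibr}: on each $L_i(t)$ not on a discontinuity line the field $n(t,\cdot)$ is exactly the variational Cahn--Hoffmann field of Definition \ref{d:vch}, because \eqref{f:CHespl}--\eqref{f:velocita} is the unique minimiser of $\mathcal N_L$, and its divergence satisfies $\partial_t d-\dive n = v_{L_i(t)} - \dive n \in G_\eps$; on an $L_i(t)$ lying on a discontinuity line the inclusion $\partial_t d-\dive n\in G_\eps=[\alpha,\beta]$ is exactly what the Filippov selection encodes, since $v^{in},v^{out}$ are convex averages of $\alpha,\beta$ and the sliding velocity $0$ lies in $[\alpha,\beta]$ as well (using $\alpha<0<\beta$). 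The continuity of $n$ across vertices is built in because adjacent calibrating fields share the prescribed vertex values $n(p),n(q)$ from \eqref{e:bcon}.

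The last step is the pinning statement. If $L_i$ satisfies (1), i.e.\ $v^{in}_{L_i}<0<v^{out}_{L_i}$, then both adjacent slabs push the edge back onto the discontinuity line, so the Filippov dynamics forces $L_i(t)$ to remain on that line with zero normal velocity; this persists exactly as long as the sign condition $v^{in}_{L_i(t)}\le 0\le v^{out}_{L_i(t)}$ is maintained, which is the assertion. One should note that $v^{in}_{L_i(t)},v^{out}_{L_i(t)}$ change in time only through the length $\ell_i(t)$ and the curvature factor $\chi_{L_i}2/\ell_i$, so these are monotone functions of $\ell_i(t)$ and the pinning interval is genuinely nontrivial; the neighbouring edges of $L_i$ keep moving and change $\ell_i(t)$ until the sign condition is first violated, at which point $L_i$ unpins. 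I expect the main obstacle to be the bookkeeping for edges on discontinuity lines: one must carefully justify that the Filippov solution concept applied to the vertex ODE is equivalent to the differential inclusion (iv) for the distance function, i.e.\ that the formal $\eps/4$--shifted averages $v^{in},v^{out}$ of Definition \ref{d:crsu} are exactly the one--sided limits of the admissible velocities as the boundary approaches the discontinuity manifold, and that the selection among $\{1,M(L+\tfrac\eps4\nu),M(L-\tfrac\eps4\nu)\}$ in the ambiguous case is consistent with a Filippov solution — this is precisely where the non--uniqueness of the evolution (absence of (1)--(3)) comes from, and where \cite{Fi} is invoked.
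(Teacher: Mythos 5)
Your proposal follows essentially the same route as the paper: reduce the evolution to a system of ODEs for the edge positions, invoke Filippov's theory for the discontinuous right-hand side on the set where an edge lies on a discontinuity line, obtain existence from boundedness/measurability of the field, derive uniqueness from the sign dichotomy (1)--(3), and read off pinning from the sliding mode when $v^{in}<0<v^{out}$. The only point where the paper is noticeably more explicit is the computation of the Filippov set-valued extension: it uses that each component $V_i$ depends on $(s_{i-1},s_i,s_{i+1})$ and is discontinuous only in $s_i$, so the multifunction factorizes as a product of intervals $I_1(s)\times\cdots\times I_m(s)$, which is precisely what justifies the componentwise argument (pinning for indices in $\mathcal{N}_1$, transversal crossing for $\mathcal{N}_2\cup\mathcal{N}_3$) that you gesture at via ``stable sliding / transversal approach.''
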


\begin{proof}
A given coordinate polyrectangle $E$, with edges
$\tilde{L}_1,\ldots,\tilde{L}_n$, is completely
detemined by the strings $(\tilde{\nu}_1,\ldots,\tilde{\nu}_n)$ and 
$(\tilde{s}_1,\ldots,\tilde{s}_n)$,
where $\tilde{\nu}_i$ and $\tilde{s}_i$ are the inner normal vector and the 
distance from the 
origin of the edge $L_i$, $i=1,\ldots,n$, respectively. 

Let us denote $e_1=e_5=(1,0)$, $e_2=(0,1)$, $e_3=(-1,0)$, and $e_4=e_0=(0,-1)$,
so that if  $\tilde{\nu}_i = e_j$, then $\tilde{\nu}_{i+1} \in 
\{e_{j-1},e_{j+1}\}$.

We associate to $E$ a breaking 
configuration $(\nu_1,\ldots,\nu_m)$, $(s_1,\ldots,s_m)$  based on the cracking 
set--up of its edges in the following way. 

If $M(L_1)=1$, then  $\nu_1=\tilde{\nu}_1$ and $s_1=\tilde{s}_1$. If 
$M(L_1)=3$, $q_{1,b}=q_1$, and $\nu_1=e_j$  then 
$(\nu_1,\nu_2,\nu_3)= (e_j,e_{j+1},e_j)$ and 
$(s_1,s_2,s_3)=(\tilde{s}_1, p_{1,b}, 
\tilde{s}_1)$. Similarly, if $M(L_1)=3$, $p_{1,b}=p_1$, and $\nu_1=e_j$  then 
$(\nu_1,\nu_2,\nu_3)= (e_j,e_{j-1},e_j)$ and 
$(s_1,s_2,s_3)=(\tilde{s}_1, q_{b,1}, 
\tilde{s}_1)$. Finally, if $M(L_1)=5$, then 
$(\nu_1,\nu_2,\nu_3,\nu_4,\nu_5)= (e_j,e_{j+1},e_j,e_{j-1},e_j)$ and 
$(s_1,s_2,s_3,s_4,s_5)=(\tilde{s}_1, p_{1,b}, \tilde{s}_1, q_{1,p}, 
\tilde{s}_1)$. The subsequent elements of the strings are obtained applying the 
same procedure to $L_2$, and so forth. 

By Definition \ref{d:flow}, a forced crystalline flow 
$E(t)$, $t\in [0,T]$ given by calibrable coordinate polyrectangles with 
edges $L_1(t),\ldots,L_m(t)$ and
with normal direction $(\nu_1,\ldots,\nu_m)$, is then identified with a 
solution 
$s(t)=(s_1(t),\ldots,s_m(t))$  to the system of ODEs
\begin{equation}\label{f:dissyst}
s'=V(s) \qquad \text{in}\ [0,T]
\end{equation}
where $V \colon \R^m \to \R^m$ is the field, discontinuous on 
\[
\Sigma=\left\{s\in \R^m \colon \ \exists s_i\in 
\frac{\eps}{2}\Z\right\},
\]
and defined outside $\Sigma$ by
\[
V=(V_1,\ldots,V_m),\qquad  V_i(s)=-\left(\chi_{L_i}\frac{2}{\ell_i(s)}+ 
\frac{1}{\ell_i(s)} 
\int_{L_i(s)} 
g_\eps\right),\quad i=1,\ldots, m, \ s\not\in \Sigma.
\]

Notice that the fictitious edges with zero length, possibly added in the 
breaking
configuration of $E$, are contained on discontinuity lines of $g_\eps$. 
Then either $E$ is calibrable, or it corresponds to a string $s\in\Sigma$.

System \eqref{f:dissyst} fits therefore into Filippov's theory of
discontinuous dinamical systems (see \cite{Fi}, \cite{Co}): the field $V$
is extended on $\Sigma$ by the multifunction
\begin{equation}\label{f:mult}
F(V)(s)= \text{co}\left\{\lim_{k\to \infty}V(s^k), s^k \to s, 
\ s_k \not\in \Sigma \right\}, \qquad s \in\Sigma,
\end{equation}
(where we denote by $\text{co}(A)$ the convex envelope of a set $A$) 
and a solution of \eqref{f:dissyst} is, by definition, a solution of 
the differential inclusion $s'\in F(V)(s)$. Since $V\colon \R^m \to \R^m$
is measurable and essentially bounded, then there exists at least a solution
of such a differential inclusion, starting from any initial datum $s$.

In order to deal with the uniqueness of solutions, we need an explicit 
computation of the multifunction $F(V)$ on $\Sigma$. 

For every $s \in \Sigma$, and for every component $s_i$ of $s$ such that
$s_i\in \frac{\eps}{2}\Z$, and $s_{i-1}\neq s_{i+1}$, so that
$\ell_i(s)>0$, let $V_i^+(s)$ and 
$V_i^+(s)$ be the values
\[
\begin{split}
V_i^+(s) & =
-\left(\chi_{L_i}\frac{2}{\ell_i(s)}+ 
\frac{1}{\ell_i(s)} 
\int_{L_i(s)+\frac{\eps}{4}\nu_i} 
g_\eps\right), \\
V_i^-(s) & =-\left(\chi_{L_i}\frac{2}{\ell_i(s)}+ 
\frac{1}{\ell_i(s)} 
\int_{L_i(s)-\frac{\eps}{4}\nu_i} 
g_\eps\right),
\end{split}
\]
and let $I(V_i^-(s),V_i^+(s))$ be the interval with endpoints 
$V_i^-(s)$ and $V_i^+(s)$.  

For every $i=1,\ldots,m$, 
$V_i(s)$ depends only on $s_{i-1}$, $s_{i}$, and $s_{i+1}$, and it is 
discontinuous only in the $s_{i}$ variable, 
then $F(V)(s)$ in \eqref{f:mult} is 
the 
convex set
\[
F(V)(s)= I_1(s) \times \cdots \times I_m(s)
\]
where
\[
I_i(s)=
\begin{cases}
\{V_i(s)\}, & \text{if}\ s_i\not\in \frac{\eps}{2}\Z, \\
I(V_i^-(s),V_i^+(s)) & \text{if}\ s_i\in \frac{\eps}{2}\Z, \text{and}\ 
\ell_i>0\\
[\alpha,\beta] & \text{if}\ s_i\in \frac{\eps}{2}\Z, \text{and}\ \ell_i=0,
\end{cases}
\qquad i=1,\ldots,m.
\]
Assume now that every element of the breaking configuration of $\partial E$
with positive length satisfies one of the conditions
(1), (2), (3). 

Concerning the edges with zero length, notice that 
if $L_i(t)$ is an edge starting from $L_i$ with $\ell_i=0$, then
$\ell_i(t)>0$  and  $v^{in}_{L_i(t)}=\alpha$, $v^{out}_{L_i(t)}=\beta$ for $t>0$
small enough, so that the edge 
$L_i(t)$ fulfills (1). 
Hence we can split the indices $\{1,\ldots,m\}=
\mathcal{N}_1\cup\mathcal{N}_2\cup\mathcal{N}_3$ in such a way
$L_i(t)$ satisfies (j) for every $i\in \mathcal{N}_j$, locally near $t=0$.

Let $s^0$ be the string corresponding to the breaking configuration of
$\partial E$, let $E(t)$ be any
evolution obtained by solving the differential inclusion \eqref{f:dissyst}
with initial datum $s(0)=s^0$, and let $\overline{t}\in (0,T]$ be such that
$L_i(t)$ satisfies (j) in $(0,\overline{t})$ for every $i\in \mathcal{N}_j$, 
$j=1,2,3$. 
The family $E(t)$ is then identified with $s(t)=(s_1(t), \ldots,s_m(t))$
solving $s_i'(t)\in I_i(s(t))$ a.e. in $(0,\overline{t})$, $i=1,\dots,m$. 

Then, for every $i\in \mathcal{N}_1$, we have that 
$s_i'(s_i-s_i(0)) \leq 0$ for every choice of 
$s_i'(t)\in 
I_i(s(t))=I(V_i^-(s(t)),V_i^+(s(t)))$, and hence $s_i'=0$ in 
$(0,\overline{t})$.  
 (see, e.g., 
\cite{Fi} Corollary 2.10.2).

On the other hand, for every $i\in \mathcal{N}_2\cup  \mathcal{N}_3$, then 
either $s_i'>0$ or $s_i'< 0$  in $(0,\overline{t})$, and hence 
$s_i(t)=\{V_i(s(t))\}$ a.e. in $(0,\overline{t})$.

In conclusion, since the function $V$ is Lipschitz continuous outside 
$\Sigma$,  the solution of the differential inclusion is unique 
in $(0,\overline{t})$, and it is fully determined by the law
\[
s'_i =
\begin{cases}
0 & i\in \mathcal{N}_1 \\
V_i(s) & i\in \mathcal{N}_2\cup  \mathcal{N}_3.
\end{cases} 
\qquad \text{a.e. in} \ (0,\overline{t}).   
\] 
In terms of the breaking configuration of $\partial{E}$, we can conclude 
that every edge $L_i$, with $i\in \mathcal{N}_2\cup  \mathcal{N}_3$ crosses 
immediately the discontinuity line, moving
inward (respectively outward) if $i\in \mathcal{N}_2$ (respectively 
$i\in \mathcal{N}_3$),
while every
edge $L_i$ with $i\in \mathcal{N}_1$ is pinned on the discontinuity 
line.
\end{proof}

\begin{rk}
\label{r:equilibria}
As a consequence of Proposition \ref{p:pinning}, if $\alpha+\beta<0$, for every
$\eps>0$ there are 
nontrivial equilibria of the forced crystalline curvature flow. For example, 
a calibrable coordinate
polyrectangle $E$ such that
\begin{itemize}
\item[(a)] every vertex of $E$ 
is also a
vertex of of a square $Q \in \acube$, $Q\subseteq E$,
\item[(b)] every edge of $\partial E$ with zero $\varphi$--curvature has 
length $\ell=\eps/2$,
\item[(c)] every edge of $\partial E$ with positive $\varphi$--curvature has 
length $\ell$ very closed to $-4/(\alpha+\beta)$,
\end{itemize}
is pinned. Namely, requirement (a) implies that every edge of $\partial E$ 
lies on a 
discontinuity line, (b) guarantees that $v^{in}_L=\alpha$ and 
$v^{out}_L=\beta$ for 
every 
edge $L$ with zero $\varphi$--curvature, while (c) guarantees that 
\[
v^{in}_L= \frac{2}{\ell} +\frac{\alpha+\beta}{2}- 
\frac{(\beta-\alpha)\varepsilon}{4 \ell}<0, \quad 
v^{out}_L= \frac{2}{\ell} +\frac{\alpha+\beta}{2}+ 
\frac{(\beta-\alpha)\varepsilon}{4 \ell}>0.
\]
for every edge $L$ with positive $\varphi$--curvature.
   
In particular, the symmetric equilibria $O_\eps$ (see Figure \ref{fig:rhombus})
converge, as $\eps \to 0$ to an octagon $O$ having horizontal and vertical 
edges with length $\ell=-4/(\alpha+\beta)$, connected by diagonal edges.
\end{rk}

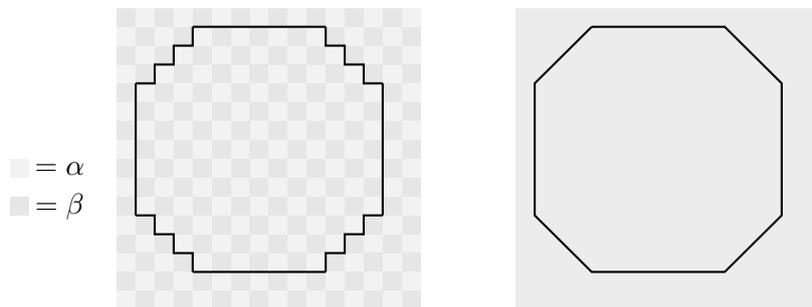
\begin{figure}[h!]
\centering
\begin{tikzpicture}[scale=0.5]
\foreach \x in {1,2,...,8}
\foreach \y in {1,2,...,8}
{
\filldraw[fill=gray!10!white,draw=none] (\x,\y) rectangle +(.5,.5);
\filldraw[fill=gray!10!white,draw=none] (\x,\y) rectangle +(-.5,-.5);
\filldraw[fill=gray!20!white,draw=none] (\x,\y) rectangle +(.5,-.5);
\filldraw[fill=gray!20!white,draw=none] (\x,\y) rectangle +(-.5,.5);
}

\filldraw[fill=gray!10!white,draw=none] (-2.3,4) rectangle +(.5,.5);
\node at (-1,4.25) {$=\alpha$};
\filldraw[fill=gray!20!white,draw=none] (-2.3,3) rectangle +(.5,.5);
\node at (-1,3.25) {$=\beta$};

\draw[thick] (2.5,8) -- (6,8);
\draw[thick] (1,3) -- (1,6.5);
\draw[thick] (7.5,3) -- (7.5,6.5);
\draw[thick] (2.5,1.5) -- (6,1.5);
\draw[thick] (1,3) -- (1.5,3) -- (1.5,2.5) -- (2,2.5) -- (2,2) -- (2.5,2) -- 
(2.5,1.5);
\draw[thick] (1,6.5) -- (1.5,6.5) -- (1.5,7) -- (2,7) -- (2,7.5) -- (2.5,7.5) 
-- (2.5,8);
\draw[thick] (6,1.5) --(6,2) -- (6.5,2) -- (6.5,2.5) -- (7,2.5) -- (7,3) -- 
(7.5,3);
\draw[thick] (7.5, 6.5) -- (7,6.5) -- (7,7) -- (6.5,7) -- (6.5,7.5) -- (6,7.5) 
-- (6,8);

\filldraw[fill=gray!15!white,draw=none] (11,0.5) rectangle +(8,8);
\draw[thick] (13,8) -- (16.5,8) -- (18,6.5) -- (18,3) -- (16.5,1.5) --
(13,1.5) -- (11.5,3) -- (11.5,6.5) -- (13,8)  ;
\end{tikzpicture}
\caption{Microscopic and macroscopic nontrivial equilibrium ($\alpha+\beta<0$).}
\label{fig:rhombus}
\end{figure}

In conclusion, the forced crystalline evolutions defined in Definition 
\ref{d:flow} and starting from a polyrectangle are obtained by the following 
procedure: we set--up the initial datum and we obtain the evolution $E(t)$ 
by solving the system of ODEs \eqref{f:dissyst}, for $t\in [0,T]$, where $T>0$ 
is the first time when an edge either disappear or is no more 
calibrable. The subsequent evolution is obtained by inizializing $E(T)$ 
according to Definition \ref{d:crsu} as an initial datum for the new system of 
ODEs of the form \eqref{f:dissyst}. 

\smallskip

We are interested in stressing the macroscopic effect of the underling periodic 
structure on the geometric evolutions, depicting clearly the forced crystalline 
flows and passing to the limit as $\eps \to 0$,  and the most of the features 
are revealed by the evolution starting from the simplest crystals: the 
coordinate squares.

In what follows $S(\ell)$ will denote a coordinate square with side length 
$\ell>0$.

\begin{teor}[Effective motion of coordinate squares]\label{d:square}
Let $S(\ell_0)$ be a given coordinate square. For every $\eps>0$, let 
$S(\ell_0^\eps)$ be a
coordinate square  such that
$
d_H(S(\ell_0),S(\ell_0^\eps))<\varepsilon. 
$
Then there exists a forced crystalline curvature flow $E^\eps(t)$, 
$t\in [0,T)$,
starting from $S(\ell_0^\eps)$. Moreover, there exists a family of 
sets $E(t)$, $t\in [0,T)$, such that  $E^\eps(t)$ converges to $E(t)$ in the 
Hausdorff
topology and locally uniformly in time, as $\varepsilon \to 0$.
The limit evolution $E(t)$ is independent of
the choice of the approximating initial data  $S(\ell_0^\eps)$, but its 
geometry depends on $\ell_0$ in the following way.
\begin{itemize}
\item[(i)] If either $\alpha+\beta\geq 0$ and $\ell_0>0$ or $\alpha+\beta<0$ 
and $0<\ell_0\leq -4/(\alpha+\beta)$, then $E(t)$ is a family of coordinate 
squares 
$E(t)= 
S(\ell(t))$, with
$\ell(t)$ governed by the ODE
\begin{equation}\label{f:shrss}
\begin{cases}
\ell'=-\dfrac{4}{\ell}-(\alpha+\beta), \\
\ell(0)=\ell_0,
\end{cases}
\end{equation}
and then shrinking to a point in finite time.
\item[(ii)] If $\alpha+\beta<0$ 
and $\ell_0> -4/(\alpha+\beta)$, then 
$E(t)$ is a family of octagons $E(t)=S(\ell_0)^\circ \cap S(\tilde{\ell}(t))$, 
where $S(\ell_0)^\circ$ is the polar square of $S(\ell_0)$, and 
$\tilde{\ell}(t)$ is the
solution to \eqref{f:shrss}. In particular, the moving edges of $E(t)$ have 
length
governed by the ODE
\begin{equation}\label{f:octsq}
\begin{cases}
\ell'=\dfrac{4}{\ell}+(\alpha+\beta), \\
\ell(0)=\ell_0,
\end{cases}
\end{equation}
and then $E(t)$ is increasing in $[0,+\infty)$, and converging to a stationary 
octagon
as $t\to +\infty$.
\end{itemize}

\end{teor}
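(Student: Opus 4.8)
The plan is to build the $\eps$-level flow $E^\eps(t)$ using the well-posedness result (Proposition \ref{p:pinning}) and the set-up procedure summarized at the end of the section, then to pass to the limit by exploiting uniform-in-$\eps$ control of the governing ODE system. First I would describe the initial set-up: start from $S(\ell_0^\eps)$, which I may take to be a square whose edges lie on discontinuity lines and whose vertices are vertices of squares in $\acube$ (any such square within distance $\eps$ of $S(\ell_0)$ will do, and since the limit is claimed independent of the approximation this is harmless). One must compute, for such a square with side $\ell$, the quantities $v^{in}_L$ and $v^{out}_L$ from \eqref{f:inoutv}: by the same computation as in Remark \ref{r:equilibria}, $v^{in}_L=-\tfrac{2}{\ell}-\tfrac{\alpha+\beta}{2}+\tfrac{(\beta-\alpha)\eps}{4\ell}$ and $v^{out}_L=-\tfrac{2}{\ell}-\tfrac{\alpha+\beta}{2}-\tfrac{(\beta-\alpha)\eps}{4\ell}$ (signs adjusted for the inward normal of a convex set; here $\chi_L=1$). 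The dichotomy in the statement is then exactly the sign analysis of these two numbers: when $\alpha+\beta\ge 0$, or when $\alpha+\beta<0$ and $\ell<-4/(\alpha+\beta)$, the edge satisfies condition (2) of Proposition \ref{p:pinning}, i.e. $v^{in}_L>0$ and $v^{out}_L>0$ for $\eps$ small, so all four edges move inward and $E^\eps(t)$ stays (essentially) a square; when $\alpha+\beta<0$ and $\ell>-4/(\alpha+\beta)$, the edge satisfies condition (1), $v^{in}_L<0<v^{out}_L$, and the edge is pinned, forcing the square to break at its corners and grow into an octagon.

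Next I would carry out the analysis of case (i). As long as the side length $\ell(t)$ stays below the threshold (or unconditionally, if $\alpha+\beta\ge 0$), each of the four edges crosses its discontinuity line immediately and moves inward with velocity $V_i=-\tfrac{2}{\ell}-\tfrac{1}{\ell}\int_{L}g_\eps$; averaging the forcing over an edge of length $\ell$ gives $\tfrac1\ell\int_L g_\eps=\tfrac{\alpha+\beta}{2}+O(\eps/\ell)$, so the reduced ODE for the side is $\ell'=-\tfrac4\ell-(\alpha+\beta)+O(\eps/\ell)$. The right-hand side is Lipschitz on $\{\ell\ge\delta\}$ uniformly in $\eps$, and the $O(\eps/\ell)$ error term goes to zero uniformly on such sets; a Gronwall argument then yields $\ell^\eps(t)\to\ell(t)$ uniformly on $[0,T-\delta]$ for every $\delta>0$, where $\ell$ solves \eqref{f:shrss}, and hence $E^\eps(t)\to S(\ell(t))$ in Hausdorff distance, locally uniformly in time, up to the extinction time. (Near extinction, when $\ell(t)\to 0$, one uses the comparison $\ell'\le-\tfrac4\ell-\alpha-\beta+C\eps/\ell\le -c/\ell$ for $\ell$ small to see that $E^\eps$ also vanishes in time close to $T$, so the convergence holds locally uniformly on $[0,T)$.) The facet-breaking at scale $\eps$ mentioned in the introduction is invisible in the limit because the created side segments have length $O(\eps)$ and are immediately reabsorbed, which is precisely the content of the "crosses immediately the discontinuity line" conclusion of Proposition \ref{p:pinning}.

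For case (ii) the geometry is different: the four horizontal/vertical edges, being longer than the threshold, are pinned (Proposition \ref{p:pinning}, condition (1)), while the four corner edges created by the breaking configuration have slope $\pm1$ and—being diagonal—are single-valued for the Cahn--Hoffmann field, with curvature making them retreat; by the calibrability analysis of Section \ref{s:calibr} their motion is $v=\tfrac4{\tilde\ell}+(\alpha+\beta)+O(\eps/\tilde\ell)$ where $\tilde\ell$ is the (common, by symmetry) length of a moving diagonal edge, giving \eqref{f:octsq} in the limit. Thus $E^\eps(t)$ is, up to $O(\eps)$, the intersection of the fixed polar square $S(\ell_0)^\circ$ (carrying the pinned edges) with an expanding square $S(\tilde\ell(t))$, and the same Gronwall/uniform-Lipschitz argument gives $E^\eps(t)\to S(\ell_0)^\circ\cap S(\tilde\ell(t))$; since $\tilde\ell'=\tfrac4{\tilde\ell}+(\alpha+\beta)>0$ has the positive equilibrium $\tilde\ell=-4/(\alpha+\beta)$ as an attractor, $E(t)$ increases and converges to the stationary octagon of Remark \ref{r:equilibria}. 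One must also check that the horizontal/vertical edges remain pinned for all time—i.e. that $v^{in}<0<v^{out}$ persists—which holds because pinning fixes their position, so $v^{in},v^{out}$ stay at the fixed values $\alpha$ and $\beta$ (the edges sit on discontinuity lines with the neighboring half-strips carrying $\alpha$ inward and $\beta$ outward, by the chessboard structure and the set-up condition (a)).

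The main obstacle I anticipate is not any single computation but the passage to the limit at the level of \emph{trajectories with changing topology}: a priori $E^\eps(t)$ may undergo infinitely many breaking/reabsorption events as $\eps\to 0$, and one needs a uniform bound on the number of relevant events per unit time (or, better, an argument that only $O(1/\eps)$-scale fluctuations occur between consecutive discontinuity lines, which are uniformly small). The clean way around this is to note that in both cases the \emph{moving} edges have a definite sign of velocity that does not change (inward in case (i) until extinction; the diagonal edges shrink monotonically in case (ii)), so the relevant side length is monotone and the crossing times of the lattice $\tfrac\eps2\Z$ are $O(\eps)$ apart with $O(\eps)$-small jumps in the reduced variable; this reduces everything to a single scalar ODE with a uniformly-Lipschitz, $\eps$-perturbed right-hand side, and Filippov's theory plus Gronwall closes the argument. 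Independence of the limit from the choice of $S(\ell_0^\eps)$ then follows because any two admissible approximations have initial side lengths agreeing up to $O(\eps)$, and the limiting ODE depends continuously on its initial datum.
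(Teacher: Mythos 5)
Your overall architecture---set up the initial square on the discontinuity lattice, apply Proposition \ref{p:pinning} to reduce to a discontinuous ODE, average the forcing, and close with a Gronwall estimate---is the paper's strategy, and your case~(i) macroscopic conclusion is correct. But there are two concrete errors that would break the argument if carried out in detail.

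First, case~(ii) has the mechanism reversed. Plugging into \eqref{f:inoutv} (with $\chi_L=1$ and the paper's sign convention, as in Remark~\ref{r:equilibria}), one finds $v^{in}_L=\frac{2}{\ell}+\frac{\alpha+\beta}{2}-\frac{(\beta-\alpha)\eps}{4\ell}$ and $v^{out}_L=\frac{2}{\ell}+\frac{\alpha+\beta}{2}+\frac{(\beta-\alpha)\eps}{4\ell}$; you have an extra overall sign. For $\ell_0>-4/(\alpha+\beta)$ bounded away from the threshold and $\eps$ small, both are negative, so the long horizontal and vertical edges fall into condition~(3) of Proposition~\ref{p:pinning} and move \emph{outward} immediately; they are \emph{not} pinned. (Your narrative is also self-contradictory: a pinned edge cannot make the square ``break and grow''.) What is actually pinned, after each outward step, are the newly created tiny zero-curvature corner pieces of length $\eps/2$, which satisfy $v^{in}=\alpha<0<\beta=v^{out}$ (condition~(1)). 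As the long facets step outward and shrink, the accumulated pinned corner pieces form a staircase; only in the limit $\eps\to0$ does this staircase look like a slope-$\pm1$ edge. At the $\eps$-level there is no diagonal edge, so the claim that they are ``single-valued for the Cahn--Hoffmann field'' and ``retreat by curvature'' does not apply, and the formula $v=\frac{4}{\tilde\ell}+(\alpha+\beta)$ is in fact the ODE for the \emph{length} of the moving horizontal/vertical facets, obtained from the constraint that their endpoints lie on the (fixed) diagonal barrier, not a retreat velocity of diagonal edges. Your sign for the monotonicity statement is also off: with $\tilde\ell(0)=\ell_0>-4/(\alpha+\beta)$ one has $\frac{4}{\tilde\ell}+(\alpha+\beta)<0$, so the facet length decreases to the fixed point.

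Second, case~(i) hides a subtlety that the paper treats as its own subcase. When $4/(\beta-\alpha)<\ell_0\leq-4/(\alpha+\beta)$ (or $\ell_0>4/(\beta-\alpha)$ with $\alpha+\beta\geq0$), the edges do \emph{not} simply cross each discontinuity line: at some $t_0$ between lattice lines the facet fails the calibrability criterion of Proposition~\ref{p:gencalibr} and breaks with multiplicity $5$. The small zero-curvature pieces generated at the corners move inward faster than the central piece (Proposition~\ref{r:breakablepos}(i)), reach the next discontinuity line first, get pinned there, and are then caught up by the central piece, recomposing the square at side $\ell-2\eps$. This ``breaking and recomposing'' is not the ``crosses immediately'' conclusion of Proposition~\ref{p:pinning} (which concerns edges \emph{sitting on} a discontinuity line); it is a separate multi-step analysis that justifies why the microscopic facet-breaking is invisible at the macroscopic level. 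Your Gronwall closure and the independence-of-approximation argument are sound given that lemma, but the lemma itself needs to be stated and proved, not waved away.
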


\begin{proof}

Given $S(\ell_0)$ and $\eps>0$, let $S(\ell_0^\eps)$ be  a square such that
$
d_H(S(\ell_0),S(\ell_0^\eps))<\varepsilon. 
$

\medskip

\noindent Case (i)a: $\ell_0 \leq 4/(\beta-\alpha)$ (self-similar 
shrinking). 

\smallskip

By Proposition \ref{p:fracon}, the breaking configuration of 
$\partial S(\ell_0^\eps)$ has no breaking points, and,
by Remark \ref{velfrac} either $v_L^{in},v_L^{out}> 0$, if 
$L\subseteq \partial S(\ell_0^\eps)$ is  on a 
discontinuity line of $g_\eps$, or $v_L> 0$, otherwise. Then, by Remark
\ref{r:bcuniq}, and Proposition \ref{p:pinning}, there exists a unique forced 
crystalline flow $E^\eps(t)$ starting from $S(\ell_0)$, and it is given by 
calibrable squares $S(\ell^\eps(t))$ with side 
length governed by the ODE
\begin{equation}\label{f:epsshrss}
{(\ell^\eps)}'=-\frac{4}{\ell^\eps}-(\alpha+\beta)-
\frac{\beta-\alpha}{\ell^\eps}(\ell_\beta^\eps-\ell_\alpha^\eps).
\end{equation}
Since $|\ell_\beta^\eps-\ell_\alpha^\eps|\leq \eps/2$, a passage to the 
limit in \eqref{f:epsshrss} as $\eps \to 0$ shows that 
$E^\eps(t)$ converges in the 
Hausdorff
topology and locally uniformly in time
to the family of squares $S(\ell(t))$ with side 
length governed by the ODE \eqref{f:shrss}.

\medskip

\noindent Case (i)b: 
either 
$\ell_0> 4/(\beta-\alpha)$ (if $\alpha+\beta\geq 0$), 
or 
$4/(\beta-\alpha) < \ell_0 
\leq -4/(\alpha+\beta)$ (if $\alpha+\beta< 0$) (shrinking with 
temporary breaking).

\smallskip

As a first step, we assume, in addition, that every vertex of 
$S(\ell_0^\eps)$ 
is also a
vertex of of a square $Q \in \acube$, $Q\subseteq S(\ell_0^\eps)$ 
(see Figure \ref{fig:brec}(I)), so that the edges lie on discontinuity
lines 
of $g_\eps$ and they have (the same) velocities  $v_{0,\eps}^{in}$, 
$v_{0,\eps}^{out}\geq 0$. Hence, by Definition \ref{d:crsu} and 
Proposition \ref{p:fracon}, we have 
\[
M(L^\eps)=M\left(L^\eps+ \frac{\eps}{4}\nu(L^\eps)\right)=1,
\qquad \forall L^\eps \subseteq \partial S(\ell_0^\eps),
\]
and, by Proposition \ref{p:pinning}, there exists a unique
forced crystalline flow starting from $S(\ell_0^\eps)$, given by squares 
$S(\ell^\eps(t))$ with side length governed by the ODE 
\eqref{f:epsshrss}, and defined in $(0,t_0)$ where
\[
t_0=\sup\{t>0 \colon \ S(\ell(s))\ \text{is calibrable for every}\ s \in 
(0,t)\}
\]
(see Figure \ref{fig:brec}(II)). By symmetry, the breaking set--up of 
every edge of $E(t_0)=S(\ell(t_0))$ is the same, and it is given by Proposition 
\ref{r:breakablepos}(i). More precisely, every edge 
$L^\eps_i(t_0)$ has cracking multiplicity $M(L^\eps_i(t_0))=5$, and 
set--up $\mathcal{C}(L^\eps_i(t_0))=\{p_i,p_{i,b},q_{i,b},q_i\}$ with
\[
\frac{\eps}{2}+\tilde{\sigma}=|p_i-p_{i,b}|=|q_i-q_{i,b}|= 
|p_j-p_{j,b}|=|q_j-q_{j,b}| 
\ \forall i,j=1,\ldots,4,
\]
where $\tilde{\sigma}$ is the calibrability threshold defined in
\eqref{f:tildesig}.
Moreover, by Remark \ref{r:symm}, using the notation of Proposition 
\ref{r:breakablepos}, we have
$L^\eps_i(t_0)=L_i^+\cup L_i^c \cup L_i^-$, and $v^+=v^-=v^c$.

Then, by Proposition \ref{p:pinning}, the evolution admits a
unique extension $E^\eps(t)$, given
by polyrectangles with 20 edges, and defined for $t\in (t_0,t_1)$,
where  $t_1$ is the first time when an edge of $E^\eps(t)$ touches 
a discontinuity line of $g_\eps$. By simmetry, the evolution
in $(t_0,t_1)$ is fully depicted by its behavior near a vertex of
$S(\ell^\eps(t_0))$ (see Figure \ref{fig:brec}(III)): pinned edges
with zero $\varphi$--curvature are generated in the normal direction 
of the edges of  $S(\ell^\eps(t_0))$ at the breaking points, 
while the edges parallel to the edges of  
$S(\ell^\eps(t_0))$ move inward. More precisely,
the edges with positive $\varphi$--curvature move inward with
constant velocity
\begin{equation}\label{f:vc}
v_c^\varepsilon=\frac{2}{\ell_0^\eps-2\eps}+\frac{\alpha+\beta}{2}+
\frac{(\alpha-\beta)\varepsilon}{\ell_0^\eps-2 \eps},
\end{equation}
while the small edges with zero $\varphi$--curvature move inward 
with velocity $v_{\pm}^\varepsilon (t)>v_c^\varepsilon$, and
reach a discontinuity line at time $t_1$ 
(see Figure \ref{fig:brec}(IV)).
Every edge $S(\ell^\eps(t_1))$ lying on a discontinuity line of 
$g_\eps$ has zero $\varphi$--curvature, and $v^{in}=\alpha$, 
$v^{out}=\beta$, and, by Proposition \ref{p:pinning}, there exists 
a unique extension of the evolution after $t_1$, given by polyrectangles
with pinned edges with zero $\varphi$--curvature, and moving edges with 
positive 
$\varphi$--curvature. If we denote by $t_2$ the first time when 
those edges reach the discontinuity lines, in such a way that
$E^\eps(t_2)$ becomes again a square $S(\ell_0^\eps-2\eps)$, then we have
\[
\eps > v_c^\varepsilon (t_2-t_1)> (k+o(\eps))(t_2-t_1),
\]
where $k$ is a constant independent of $\eps$, and hence the evolution
recomposes the square in a time lapse of order $\eps$ (see Figure 
\ref{fig:brec}(V)).
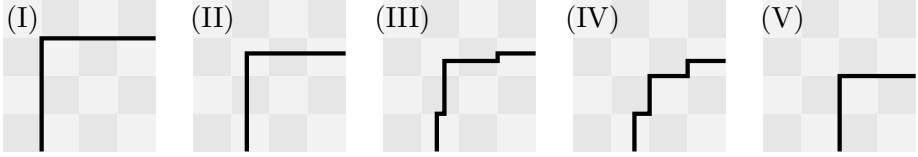
\begin{figure}[h!]

\begin{tikzpicture}[scale=1]
\foreach \x in {1,2}
\foreach \y in {1,2}
{
\draw[fill]  (\x,\y) circle [radius=0.05];
\filldraw[fill=gray!20!white,draw=none] (\x,\y) rectangle +(.5,.5);
\filldraw[fill=gray!20!white,draw=none] (\x,\y) rectangle +(-.5,-.5);
\filldraw[fill=gray!10!white,draw=none] (\x,\y) rectangle +(.5,-.5);
\filldraw[fill=gray!10!white,draw=none] (\x,\y) rectangle +(-.5,.5);
}
\draw[ultra thick] (1,0.5) -- (1,2) -- (2.5,2);
\node at (0.75,2.25) {(I)};

\foreach \x in {3.5,4.5}
\foreach \y in {1,2}
{
\draw[fill]  (\x,\y) circle [radius=0.05];
\filldraw[fill=gray!20!white,draw=none] (\x,\y) rectangle +(.5,.5);
\filldraw[fill=gray!20!white,draw=none] (\x,\y) rectangle +(-.5,-.5);
\filldraw[fill=gray!10!white,draw=none] (\x,\y) rectangle +(.5,-.5);
\filldraw[fill=gray!10!white,draw=none] (\x,\y) rectangle +(-.5,.5);
}
\draw[ultra thick] (1.2+2.5,0.5) -- (1.2+2.5,1.8) -- (2.5+2.5,1.8);
\node at (0.75+2.5,2.25) {(II)};

\foreach \x in {6,7}
\foreach \y in {1,2}
{
\draw[fill]  (\x,\y) circle [radius=0.05];
\filldraw[fill=gray!20!white,draw=none] (\x,\y) rectangle +(.5,.5);
\filldraw[fill=gray!20!white,draw=none] (\x,\y) rectangle +(-.5,-.5);
\filldraw[fill=gray!10!white,draw=none] (\x,\y) rectangle +(.5,-.5);
\filldraw[fill=gray!10!white,draw=none] (\x,\y) rectangle +(-.5,.5);
}
\draw[ultra thick] (1.2+5,0.5) -- (1.2+5,1) -- (1.3+5,1)  -- (1.3+5,1.7) --
(2+5,1.7) -- (2+5,1.8) --
(2.5+5,1.8);
\node at (0.75+5,2.25) {(III)};

\foreach \x in {8.5,9.5}
\foreach \y in {1,2}
{
\draw[fill]  (\x,\y) circle [radius=0.05];
\filldraw[fill=gray!20!white,draw=none] (\x,\y) rectangle +(.5,.5);
\filldraw[fill=gray!20!white,draw=none] (\x,\y) rectangle +(-.5,-.5);
\filldraw[fill=gray!10!white,draw=none] (\x,\y) rectangle +(.5,-.5);
\filldraw[fill=gray!10!white,draw=none] (\x,\y) rectangle +(-.5,.5);
}
\draw[ultra thick] (1.3+7.5,0.5) -- (1.3+7.5,1) -- (1.5+7.5,1)  -- 
(1.5+7.5,1.5) --
(2+7.5,1.5) -- (2+7.5,1.7) --
(2.5+7.5,1.7);
\node at (0.75+7.5,2.25) {(IV)};

\foreach \x in {11,12}
\foreach \y in {1,2}
{
\draw[fill]  (\x,\y) circle [radius=0.05];
\filldraw[fill=gray!20!white,draw=none] (\x,\y) rectangle +(.5,.5);
\filldraw[fill=gray!20!white,draw=none] (\x,\y) rectangle +(-.5,-.5);
\filldraw[fill=gray!10!white,draw=none] (\x,\y) rectangle +(.5,-.5);
\filldraw[fill=gray!10!white,draw=none] (\x,\y) rectangle +(-.5,.5);
}
\draw[ultra thick] (1.5+10,0.5) -- (1.5+10,1.5)  -- 
(2.5+10,1.5);
\node at (0.75+10,2.25) {(V)};
\end{tikzpicture}
\caption{The breaking and recomposing phenomenon} \label{fig:brec}
\end{figure} 

Since $E^\eps(t_2)$ is a square with every vertex which is also a
vertex of of a square $Q \in \acube$, $Q\subseteq S(\ell_0^\eps)$, 
the (unique) evolution then either iterates this ``breaking and recomposing'' 
motion, if $\ell^\eps(t_2)=\ell_0^\eps-2\eps>  4/(\beta-\alpha)$, or 
it is a family of 
shrinking squares, if $\ell^\eps(t_2)\leq 4/(\beta-\alpha)$.
In any case, $E^\eps(t)$ can be approximate, in the Hausdorff topology 
and locally uniformly 
in 
time, by a family of squares with side length satisfying \eqref{f:epsshrss}, so 
that the limit motion as $\eps \to 0$ is a family
of squares $S(\ell(t))$  governed by the evolution 
law \eqref{f:shrss}. 

Moreover, for every square $S(\ell_0^\eps)$ such that $
d_H(S(\ell_0),S(\ell_0^\eps))<\varepsilon$, the forced crystalline 
evolution generates and absorbs the small edges near its corners in slightly 
different 
ways, but it is always approximable by a family of squares with side length 
satisfying \eqref{f:epsshrss}. In particular, the 
limit evolution does not depend on the choice of 
the approximating data. 

\medskip

\noindent Case (ii): $\alpha+\beta<0$, and $\ell_0 > 
-4/(\alpha+\beta)$ (confinement).

\smallskip

If every vertex of $S(\ell_0^\eps)$ 
is also a
vertex of of a square $Q \in \acube$, $Q\subseteq S(\ell_0^\eps)$ 
(see Figure \ref{fig:barr}(I)), then the edges of the square lie on 
discontinuity lines of $g_\eps$, and 
they have the same velocities  $v_{0,\eps}^{in}$, 
$v_{0,\eps}^{out}<0$. 
Hence, by Definition \ref{d:crsu} and 
Proposition \ref{p:fracon}, we have 
\[
M(L^\eps)=M\left(L^\eps- \frac{\eps}{4}\nu(L^\eps)\right)=5,
\qquad \forall L^\eps \subseteq \partial S(\ell_0^\eps),
\]
and, by Proposition \ref{r:breakablepos}, every edge of $S(\ell_0^\eps)$
is splitted as  $L^\eps_i=L_i^+\cup L_i^c \cup L_i^-$ 
with $v^{in}_{\pm}=\alpha$, $v^{out}_{\pm}=\beta$, and 
$v_c^{in}, v_c^{out}<0$. Then, 
by Proposition \ref{p:pinning}, there exists a unique
forced crystalline flow starting from $S(\ell_0^\eps)$
(see Figure \ref{fig:barr}(II)), producing  small pinned corners having 
edges with zero $\varphi$--curvature and length $\eps/2$, while
he long edges with positive $\varphi$--curvature move outward with constant
velocity $v_c$ given by \eqref{f:vc} until they reach, the next discontinuity 
line (see Figure 
\ref{fig:barr}(III)). 

\begin{figure}[h!]
\begin{tikzpicture}[scale=1]
\foreach \x in {1,2,3}
\foreach \y in {1,2}
{
\draw[fill]  (\x,\y) circle [radius=0.05];
\filldraw[fill=gray!20!white,draw=none] (\x,\y) rectangle +(.5,.5);
\filldraw[fill=gray!20!white,draw=none] (\x,\y) rectangle +(-.5,-.5);
\filldraw[fill=gray!10!white,draw=none] (\x,\y) rectangle +(.5,-.5);
\filldraw[fill=gray!10!white,draw=none] (\x,\y) rectangle +(-.5,.5);
}
\draw[ultra thick] (1.5,0.5) -- (1.5,1.5)  -- 
(3.5,1.5);
\node at (0.75,2.25) {(I)};

\foreach \x in {4.5,5.5,6.5}
\foreach \y in {1,2}
{
\draw[fill]  (\x,\y) circle [radius=0.05];
\filldraw[fill=gray!20!white,draw=none] (\x,\y) rectangle +(.5,.5);
\filldraw[fill=gray!20!white,draw=none] (\x,\y) rectangle +(-.5,-.5);
\filldraw[fill=gray!10!white,draw=none] (\x,\y) rectangle +(.5,-.5);
\filldraw[fill=gray!10!white,draw=none] (\x,\y) rectangle +(-.5,.5);
}
\draw[ultra thick] (1.2+3.5,0.5) -- (1.2+3.5,1) -- (1.5+3.5,1)  -- 
(1.5+3.5,1.5) --
(2+3.5,1.5) -- (2+3.5,1.8) --
(3.5+3.5,1.8);
\node at (0.75+3.5,2.25) {(II)};

\foreach \x in {8,9,10}
\foreach \y in {1,2}
{
\draw[fill]  (\x,\y) circle [radius=0.05];
\filldraw[fill=gray!20!white,draw=none] (\x,\y) rectangle +(.5,.5);
\filldraw[fill=gray!20!white,draw=none] (\x,\y) rectangle +(-.5,-.5);
\filldraw[fill=gray!10!white,draw=none] (\x,\y) rectangle +(.5,-.5);
\filldraw[fill=gray!10!white,draw=none] (\x,\y) rectangle +(-.5,.5);
}
\draw[ultra thick] (1+7,0.5) -- (1+7,1) -- (1.5+7,1)  -- 
(1.5+7,1.5) --
(2+7,1.5) -- (2+7,2) --
(3.5+7,2);
\node at (0.75+7,2.25) {(III)};
\end{tikzpicture}
\caption{The cutting phenomenon} \label{fig:barr}
\end{figure}
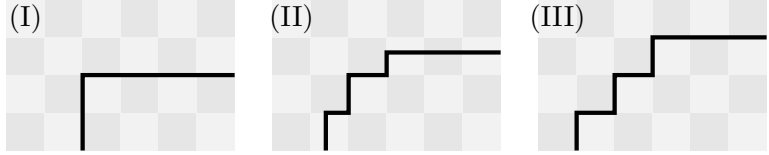

Then the process iterates, 
``cutting'' the square and reducing the length $\ell^\eps(t)$ of the edges with 
positive $\varphi$--curvature, so that their
(piecewise constant) velocity is given by
\[
v^\eps(t)= \frac{2}{\ell^\eps(t)} +\frac{\alpha+\beta}{2}- 
\frac{(\beta-\alpha)\varepsilon}{4 \ell^\eps(t)},
\] 
until the first time $t_0$ when $|v^\eps(t_0)+4/(\alpha+\beta)|<\eps$.
By Proposition \ref{p:pinning} (see also Remark \ref{r:equilibria})
every edge of $E^\eps(t)$ is pinned for $t>t_0$.
In conclusion, if we denote by $E(t)$ the family of octagons 
$E(t)=S(\ell_0)^\circ 
\cap S(\tilde{\ell}(t))$, 
where $S(\ell_0)^\circ$ is the polar square of $S(\ell_0)$, and 
$\tilde{\ell}(t)$ is the
solution to \eqref{f:shrss}, we have $d_H(E^\eps(t)-E(t))\leq c \eps$, 
so that $E^\eps(t)$ converges to $E(t)$ in the 
Hausdorff
topology and locally uniformly in time, as $\varepsilon \to 0$.

\begin{figure}[h!]
\begin{tikzpicture}[scale=1.2] 
\foreach \x in {1.2,1.3,1.38,1.45,1.5,1.53}
{
\draw[thick] (-2+\x,\x) -- (2-\x,\x) -- (\x,2-\x) -- (\x,-2+\x) -- (2-\x,-\x) 
-- (-2+\x,-\x) -- (-\x,-2+\x) -- (-\x,2-\x) -- cycle;
}
\draw[very thick] (-1,1) -- (1,1) -- (1,-1) -- (-1,-1) -- cycle;
\draw[very thick, ->] (0,1) -- (0,1.5);
\draw[very thick, ->] (0,-1) -- (0,-1.5);
\draw[very thick, ->] (1,0) -- (1.5,0);
\draw[very thick, ->] (-1,0) -- (-1.5,0);
\draw[dashed] (-2,0) -- (0,2) -- (2,0) -- (0,-2) -- cycle;
\end{tikzpicture}
\caption{The effective evolution in Case 2 of confinement.} \label{fig:pinn}
\end{figure}
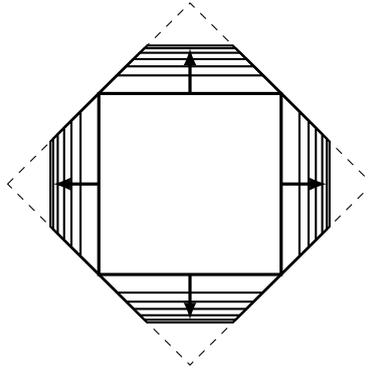
Finally, notice that the forced crystalline evolution starting from a 
general 
initial datum 
$S(\ell_0^\eps)$, with $
d_H(S(\ell_0),S(\ell_0^\eps))<\varepsilon$, reaches a configuration of the 
type depicted in Figure \ref{fig:barr}(III) in a time span of order $\eps$. 
Then the  macroscopic limit $E(t)$ does not depend on the choice of the 
approximating initial datum, and it is the effective motion of the square 
$S(\ell_0)$.  
\end{proof}

The arguments used in the proof of Theorem \ref{d:square} can be performed to 
deal with every polyrectangle (and 
hence, by approximation, to describe the effective evolution of general sets), 
but a detailed analysis of the forced crystalline flow in these 
cases requires considerable additional computation.
 Just to appreciate the 
application of the previous arguments in 
a slightly more general setting, we devote the end of this section to a 
coincise description of the motion starting from coordinate rectangles.

In what follows $R(\ell_1,\ell_2)$ will denote a coordinate rectangle with side 
lengths 
$\ell_1,\ell_2>0$. Having already characterized the evolution of a square,
without loss of generality we can assume that $\ell_{1,0}>\ell_{2,0}$.

\begin{teor}[Effective motion of coordinate rectangles]\label{d:rect}
Let $R(\ell_{1,0},\ell_{2,0})$ be a given coordinate rectangle. For every 
$\eps>0$, let 
$R(\ell_{1,0}^\eps,\ell_{2,0}^\eps)$ be a
coordinate rectangle  such that
$
d_H(R(\ell_{1,0},\ell_{2,0}),R(\ell_{1,0}^\eps,\ell_{2,0}^\eps))<\varepsilon 
$.
Then there exists a forced crystalline curvature flow 
$E^\eps(t)$, $t\in [0,T)$,
starting from $R(\ell_{1,0}^\eps,\ell_{2,0}^\eps)$. Moreover, there exists a 
family of 
sets $E(t)$, $t\in [0,T)$, such that  $E^\eps(t)$ converges to $E(t)$ in the 
Hausdorff
topology and locally uniformly in time, as $\varepsilon \to 0$.
The limit evolution $E(t)$ is independent of
the choice of the approximating initial data  
$R(\ell_{1,0}^\eps,\ell_{2,0}^\eps)$, but its 
geometry depends on the values
\[
 v_{i,0}:=\frac{2}{\ell_{i,0}^\eps} +\frac{\alpha+\beta}{2}, \qquad i=1,2 
\]
in the following way.
\begin{itemize}
\item[(i)] If $v_{i,0}\geq 0$, $i=1,2$, then $E(t)$ is a family of 
rectangles $E(t)=R(\ell_{1}(t),\ell_{2}(t))$  with
$\ell_i(t)$ governed by the system of ODEs
\begin{equation}\label{f:effrett}
\begin{cases}\displaystyle
\ell_1'= -\frac{4}{\ell_{2}} -(\alpha+\beta), \\[8pt]
\displaystyle
\ell_2'= -\frac{4}{\ell_{1}} -(\alpha+\beta), \\
\ell_1(0)= \ell_{1,0},\\
\ell_2(0)= \ell_{2,0}.
\end{cases}
\end{equation}
\item[(ii)] If either $v_{i,0}<  0$, $i=1,2$, or $v_{1,0}< 0$, 
$v_{2,0}\geq 0$ and $v_{1,0}+v_{2,0}\leq 0$, then 
$E(t)$ is a family of octagons $E(t)=Q(R(\ell_{1,0},\ell_{2,0})) \cap 
R(\tilde{\ell}_{1}(t),\tilde{\ell}_{2}(t))$, where 
$Q(R(\ell_{1,0},\ell_{2,0}))$ is the rotated square touching from outside
$R(\ell_{1,0},\ell_{2,0})$ at its vertices, and 
$(\tilde{\ell}_1,\tilde{\ell}_2)$ is the solution to \eqref{f:effrett}.
In particular, the lengths of the moving edges of $E(t)$
are governed by the system of ODEs
\begin{equation}\label{f:decsys}
\begin{cases}
\ell_1'=\dfrac{4}{\ell_1}+(\alpha+\beta), \\[6pt]
\ell_2'=\dfrac{4}{\ell_2}+(\alpha+\beta),\\
\ell_1(0)= \ell_{1,0},\\
\ell_2(0)= \ell_{2,0}.
\end{cases}
\end{equation}
\end{itemize}
\item[(iii)] If $v_{1,0}<0$, $v_{2,0}\geq 0$
and $v_{1,0}+v_{2,0}> 0$, then there exists $T>0$ such that, for $t\in [0,T]$, 
$E(t)$ is
a family of rectangles $E(t)=R(\ell_{1}(t),\ell_{2}(t))$  with
$\ell_i(t)$ governed by the system of ODEs \eqref{f:effrett}.  If 
$T=+\infty$, then $E(t)$ converges to an equilibrium (either a point or
the square $S(-4/(\alpha+\beta))$) for $t\to +\infty$. If $T<+\infty$, 
$E(t)$, $t\in [0,T)$, is the family of octagons following the rules of case 
(ii). 
\end{teor}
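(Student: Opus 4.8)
The plan is to follow the scheme of the proof of Theorem~\ref{d:square}, reducing the macroscopic motion to the ODE systems appearing in the statement and treating the three regimes separately; we describe only the new features, referring to the proof of Theorem~\ref{d:square} for the parts that carry over verbatim. Given $\eps>0$, we first compute by Propositions~\ref{r:hcurvz}--\ref{p:gencalibr} the cracking multiplicity and set--up (Definition~\ref{d:crsu}) of the four edges of $R(\ell_{1,0}^\eps,\ell_{2,0}^\eps)$, obtaining a breaking configuration; up to a perturbation of size $\eps$ of the approximating datum, every edge of positive length falls into one of the cases (1)--(3) of Proposition~\ref{p:pinning}, which then yields a forced crystalline flow $E^\eps(t)$ starting from $R(\ell_{1,0}^\eps,\ell_{2,0}^\eps)$, uniquely determined on a maximal interval. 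As in Theorem~\ref{d:square}, any datum with $d_H(R(\ell_{1,0},\ell_{2,0}),R(\ell_{1,0}^\eps,\ell_{2,0}^\eps))<\eps$ reaches, in a time lapse of order $\eps$, a configuration whose vertices are also vertices of squares $Q\in\acube$ contained in the rectangle, so it suffices to analyse the latter and the limit will be independent of the choice. Finally, since $4/\ell$ is monotone, $\ell_1-\ell_2$ is non--increasing along both systems \eqref{f:effrett} and \eqref{f:decsys}, so $\ell_1(t)\ge\ell_2(t)$ and hence $v_{1,0}(t)\le v_{2,0}(t)$ are preserved, where $v_{i,0}(t):=2/\ell_i(t)+(\alpha+\beta)/2$.

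\emph{Case (i).} Here $v_{1,0},v_{2,0}\ge0$; then the lengths are non--increasing, the inequalities persist, and the regime is invariant. Each edge is either never broken (if its length is $\le4/(\beta-\alpha)$, by Proposition~\ref{p:fracon} and Remark~\ref{velfrac}) or undergoes the breaking--and--recomposing cycle of Case~(i)b of Theorem~\ref{d:square}, which takes place on a time scale of order $\eps$ because the velocity $v_c^\eps$ in \eqref{f:vc} is bounded away from $0$. In either case $E^\eps(t)$ is approximable, in the Hausdorff topology and locally uniformly in time, by rectangles $R(\ell_1^\eps(t),\ell_2^\eps(t))$ whose sides solve the $\eps$--version of \eqref{f:effrett} — obtained, as in \eqref{f:epsshrss}, by adding $O(\eps)$ corrections to the right--hand sides — so that letting $\eps\to0$ gives \eqref{f:effrett}. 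Since $\ell_1-\ell_2\downarrow0$, $E(t)$ eventually becomes a coordinate square and then shrinks to a point by Theorem~\ref{d:square}(i).

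\emph{Cases (ii) and (iii).} In both one has $v_{1,0}<0$, hence $\ell_{1,0}>-4/(\alpha+\beta)>4/(\beta-\alpha)$, so the two longer edges of $R(\ell_{1,0}^\eps,\ell_{2,0}^\eps)$ are not calibrable (Proposition~\ref{p:gencalibr}(i)); by Proposition~\ref{r:breakablepos}(i) they break with cracking multiplicity $5$, producing near each corner a pinned staircase of edges of length $\eps/2$ with zero $\varphi$--curvature (pinned because there $v^{in}=\alpha<0$, $v^{out}=\beta>0$, case~(1) of Proposition~\ref{p:pinning}) — a diagonal edge of slope $45^{\circ}$ with zero velocity in the limit — together with a central edge of positive $\varphi$--curvature and length $\hat\ell_1^\eps=\ell_1^\eps-O(\eps)$, whose velocity $v_c^\eps$ equals $2/\hat\ell_1^\eps+(\alpha+\beta)/2$ up to $O(\eps)$. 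The shorter edges do the same if $v_{2,0}<0$, and stay calibrable, moving inward, if $v_{2,0}\ge0$. The decisive point is the comparison of the rates at which the full length $\ell_1^\eps(t)$ of a longer edge and the length $\hat\ell_1^\eps(t)$ of its calibrable core evolve: $(\ell_1^\eps)'=-2v_{2,0}+O(\eps)$ (advance of the shorter edges), while $(\hat\ell_1^\eps)'=2v_c^\eps=2v_{1,0}+O(\eps)$ (the central edge moving against the pinned diagonals), so that $(\ell_1^\eps-\hat\ell_1^\eps)'=-2(v_{1,0}+v_{2,0})+O(\eps)$. Thus, when $v_{1,0}+v_{2,0}>0$ (Case~(iii)) the diagonal corners are reabsorbed and the front recomposes a rectangle within a time $O(\eps)$, exactly as in the breaking--and--recomposing mechanism of Case~(i), so $E^\eps(t)$ is approximable by rectangles solving the $\eps$--version of \eqref{f:effrett} and the limit is $E(t)=R(\ell_1(t),\ell_2(t))$ with $(\ell_1,\ell_2)$ solving \eqref{f:effrett} on a maximal interval $[0,T)$, $T\in(0,+\infty]$ being the first time at which $v_{1,0}(t)+v_{2,0}(t)=0$; if $T=+\infty$, the solution of \eqref{f:effrett} either enters the regime of Case~(i) (and $E(t)$ then becomes a square that shrinks to a point) or converges to the equilibrium square $S(-4/(\alpha+\beta))$, while if $T<+\infty$ one re--initialises $E^\eps(T)$ by Definition~\ref{d:crsu} and the evolution enters, for $t>T$, the confinement regime below. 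When instead $v_{1,0}+v_{2,0}\le0$ (Case~(ii)) the corners persist and grow, and the flow ``cuts'' the rectangle as in Case~(ii) of Theorem~\ref{d:square} (Figure~\ref{fig:barr}): the moving edges shrink according to the $\eps$--version of \eqref{f:decsys} until they reach length $-4/(\alpha+\beta)+O(\eps)$, after which every edge is pinned (Remark~\ref{r:equilibria}, Figure~\ref{fig:pinn}), so $E^\eps(t)$ converges, with $d_H(E^\eps(t),E(t))\le c\eps$, to the octagon $E(t)=Q(R(\ell_{1,0},\ell_{2,0}))\cap R(\tilde\ell_1(t),\tilde\ell_2(t))$, whose moving edges have lengths governed by \eqref{f:decsys}.

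\emph{Main obstacle.} The delicate step is to make the recompose/cut dichotomy above rigorous, with estimates uniform in $\eps$: one must show that for $v_{1,0}+v_{2,0}>0$ the microscopic diagonal corners created by the breaking of the longer edges are swallowed by the advancing shorter edges within a time $O(\eps)$ — so that the macroscopic motion is the smooth flow \eqref{f:effrett} — whereas for $v_{1,0}+v_{2,0}\le0$ they grow to macroscopic size and generate the octagon, and that the limit evolution depends continuously on the datum across the threshold $v_{1,0}+v_{2,0}=0$ (in particular at the transition time $T$ of Case~(iii)). This requires the quantitative control of the breaking and recomposing times, in the spirit of the estimate $\eps>v_c^\eps(t_2-t_1)>(k+o(\eps))(t_2-t_1)$ used in the proof of Theorem~\ref{d:square}, but now carried out with the two families of edges interacting and with the edge lengths themselves evolving during each cycle.
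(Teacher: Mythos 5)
Your proposal is correct and follows essentially the same route as the paper: reduce to the mixed case $v_{1,0}<0\le v_{2,0}$ (the pure cases being handled as in Theorem~\ref{d:square}), set up the initial breaking at the corners, and discriminate between reabsorption (rectangle regime) and persistence (octagon regime) of the diagonal micro-corners according to the sign of $v_{1,0}+v_{2,0}$. This is exactly the sign of the quantity $U(\ell_1,\ell_2)=1/\ell_1+1/\ell_2+(\alpha+\beta)/2$ introduced in \eqref{f:uzero}, and your rate comparison $(\ell_1^\eps-\hat\ell_1^\eps)'=-2(v_{1,0}+v_{2,0})+O(\eps)$ is precisely the mechanism the paper points at with its $v_2^\eps\gtrless -v_1^\eps$ dichotomy. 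Two remarks. First, you take the cracking multiplicity of the long edge to be $5$; the paper's text writes $3$, but since the figure and symmetry argument there describe only one of the two (equivalent) corners, and since the analogous step in the proof of Theorem~\ref{d:square}(ii) gives $M=5$, your value is the internally consistent one. Second, and more substantively, your treatment of Case~(iii) merely \emph{asserts} the trichotomy (if $T=+\infty$, convergence to a point or to the square $S(-4/(\alpha+\beta))$; if $T<+\infty$, transition to octagons) without showing these are the only possibilities or which one occurs; the paper settles this by exhibiting the first integral $J(\ell_1,\ell_2)=4(\log\ell_2-\log\ell_1)+(\alpha+\beta)(\ell_2-\ell_1)$ of \eqref{f:effrett}, with $J=0$ giving the separatrix flowing into $S(-4/(\alpha+\beta))$, $J<0$ eventually entering the regime of Case~(i), and $J>0$ entering $\{U\le 0\}$ in finite time. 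You should incorporate that conserved quantity to close the phase-portrait argument. Finally, the parenthetical ``$E(t)$ becomes a coordinate square'' in your Case~(i) is unnecessary and not true before extinction (indeed for $\alpha+\beta>0$ one checks $\frac{d}{dt}\log(\ell_1/\ell_2)=(\alpha+\beta)(1/\ell_2-1/\ell_1)>0$ along \eqref{f:effrett}, so the aspect ratio increases); the theorem requires only that the rectangles follow \eqref{f:effrett}.
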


\begin{proof}

When either $v_{i,0}\geq 0$, or $v_{i,0}<  0$, $i=1,2$, the proof is very
similar to the one of Theorem \ref{d:square}, hence we address our attention 
to initial data with $v_{1,0}\leq 0$ and  $v_{2,0}>0$ (mixed case).

Assume that every vertex of the approximating initial datum 
$R(\ell_{1,0}^\eps,\ell_{2,0}^\eps)$
is  also 
a vertex of a square $Q \in \acube$, $Q\subseteq 
R(\ell_{1,0}^\eps,\ell_{2,0}^\eps)$. By symmetry, it is enough depict the
evolution of two contiguous edges $L^\eps_1$ (horizontal) and $L^\eps_2$ 
(vertical) starting as in Figure \ref{fig:rett3}. We have that
$v^{in}_{L^\eps_1}, v^{out}_{L^\eps_1}\leq 0$, so that
\[
M(L^\eps_1)=M\left(L^\eps_1-\frac{\eps}{4}\nu(L^\eps_1)\right)=3,
\]
while $v^{in}_{L^\eps_2}, v^{out}_{L^\eps_2}\geq 0$, and
\[
M(L^\eps_2)=M\left(L^\eps_2+\frac{\eps}{4}\nu(L^\eps_2)\right)=1.
\]
Therefore the forced evolution starts breaking the edge $L_1^\eps$, and 
generating small pinned edges with zero $\varphi$--curvature (see Figure 
\ref{fig:rett3}(II)).
The edges with 
positive curvature move with constant velocities $v_1^\eps$ (outward) and
$v_2^\eps$ (inward).

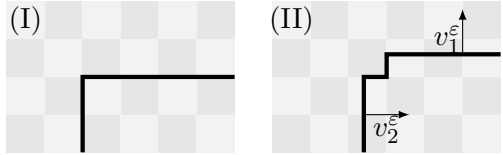
\begin{figure}[h!]
\begin{tikzpicture}[scale=1]
\foreach \x in {1,2,3}
\foreach \y in {1,2}
{
\draw[fill]  (\x,\y) circle [radius=0.05];
\filldraw[fill=gray!20!white,draw=none] (\x,\y) rectangle +(.5,.5);
\filldraw[fill=gray!20!white,draw=none] (\x,\y) rectangle +(-.5,-.5);
\filldraw[fill=gray!10!white,draw=none] (\x,\y) rectangle +(.5,-.5);
\filldraw[fill=gray!10!white,draw=none] (\x,\y) rectangle +(-.5,.5);
}
\draw[ultra thick] (1.5,0.5) -- (1.5,1.5)  -- 
(3.5,1.5);
\node at (0.75,2.25) {(I)};

\foreach \x in {4.5,5.5,6.5}
\foreach \y in {1,2}
{
\draw[fill]  (\x,\y) circle [radius=0.05];
\filldraw[fill=gray!20!white,draw=none] (\x,\y) rectangle +(.5,.5);
\filldraw[fill=gray!20!white,draw=none] (\x,\y) rectangle +(-.5,-.5);
\filldraw[fill=gray!10!white,draw=none] (\x,\y) rectangle +(.5,-.5);
\filldraw[fill=gray!10!white,draw=none] (\x,\y) rectangle +(-.5,.5);
}
\draw[ultra thick] (1.7+3.5,0.5) --  
(1.7+3.5,1.5) --
(2+3.5,1.5) -- (2+3.5,1.8) --
(3.5+3.5,1.8);
\node at (0.75+3.5,2.25) {(II)};

\draw[->] (1.7+3.5, 1) -- (2.3+3.5,1);
\node at (2+3.5,0.8) {$v_2^\eps$};

\draw[->] (3+3.5, 1.8) -- (3+3.5,2.4);
\node at (2.8+3.5,2) {$v_1^\eps$};

\end{tikzpicture}
\caption{How the mixed case starts.} \label{fig:rett3}
\end{figure}

Setting
\begin{equation}\label{f:uzero}
U(\ell_1,\ell_2):=\frac{1}{\ell_{1}}+\frac{1}{\ell_{2}} +\frac{\alpha+\beta}{2},
\end{equation}
the subsequent evolution depends on the sign of 
$U_0=U(\ell_{1,0},\ell_{1,0})$  
(note that $\alpha+\beta <0$ in this case).

\begin{figure}[h!]
\begin{tikzpicture}[scale=1.2]
\foreach \x in {4.5,5.5,6.5}
\foreach \y in {1,2}
{
\draw[fill]  (\x,\y) circle [radius=0.05];
\filldraw[fill=gray!20!white,draw=none] (\x,\y) rectangle +(.5,.5);
\filldraw[fill=gray!20!white,draw=none] (\x,\y) rectangle +(-.5,-.5);
\filldraw[fill=gray!10!white,draw=none] (\x,\y) rectangle +(.5,-.5);
\filldraw[fill=gray!10!white,draw=none] (\x,\y) rectangle +(-.5,.5);
}
\draw[ultra thick] (2+3.5,0.5) -- (2+3.5,1.8)  --
(3.5+3.5,1.8);
\node at (0.8+3.5,2.25) {(III)};
\node at (1+3.5,1.8) {$U_0\geq 0$};
\foreach \x in {1,2,3}
\foreach \y in {1,2}
{
\draw[fill]  (\x,\y) circle [radius=0.05];
\filldraw[fill=gray!20!white,draw=none] (\x,\y) rectangle +(.5,.5);
\filldraw[fill=gray!20!white,draw=none] (\x,\y) rectangle +(-.5,-.5);
\filldraw[fill=gray!10!white,draw=none] (\x,\y) rectangle +(.5,-.5);
\filldraw[fill=gray!10!white,draw=none] (\x,\y) rectangle +(-.5,.5);
}
\draw[ultra thick] (1.8,0.5) -- (1.8,1.5)  --
(2,1.5) -- (2,2) -- (2.5,2) -- (2.5,2.3) --
(3.5,2.3);
\node at (0.8,2.25) {(III)};
\node at (1,1.8) {$U_0<0$};
\end{tikzpicture}
\caption{How the mixed case carries on.} \label{fig:rett4}
\end{figure}
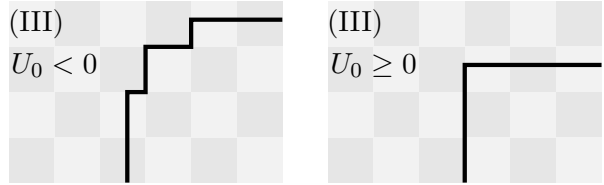

If $U_0< 0$, so that $v_{2}^\eps< -v_{1}^\eps$, the small pinned edges
with zero 
$\varphi$--curvature and ``slope 45 degrees'' are not absorbed    
(see Figure \ref{fig:rett4}, left), and the evolution ``cuts the 
vertices''. On the other hand,
the edges with positive $\varphi$--curvature move with velocities
\[
v_i^\eps(t)= \frac{2}{\ell_i^\eps(t)} +\frac{\alpha+\beta}{2}- 
\frac{(\beta-\alpha)\varepsilon}{4 \ell_i^\eps(t)}, \qquad i=1,2.
\]
Then, the effective evolution $E(t)$, in the limit $\eps\to 0$, is given by 
the family of octagons whose moving edges have lengths 
$(\ell_{1}(t),(\ell_{2}(t))$ solution to
\eqref{f:decsys}.
Since the level set
$\{U\leq 0\}$ is invariant  
under the flow of the ODEs system \eqref{f:decsys}, $E(t)$ are octagons 
(not monotonically) converging to a stationary octagon as $t \to +\infty$ (see 
Figure \ref{fig:rett6}).

If $U_0=0$, then in a time--lapse of order $\eps$ the evolution becomes a 
rectangle with the same features of the initial datum, but with $U_0^\eps<0$.
Then the effective evolution is the one depicted above.

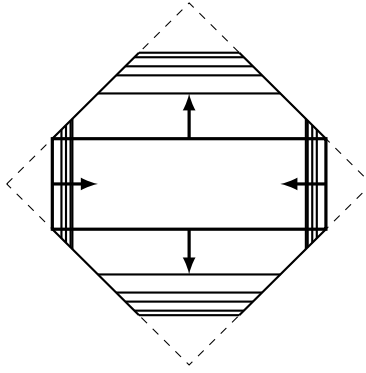
\begin{figure}[h!] 
\subfloat 
{%
\begin{tikzpicture} [scale=1.2]
\foreach \x in {1.5,1.7,1.8,1.9,1.95}
{
\draw[thick] (-2.5+\x,\x-0.5) -- (2.5-\x,\x-0.5) ;
\draw[thick] (-2.5+\x,-\x+0.5) -- (2.5-\x,-\x+0.5) ;
}
\foreach \x in {-1.4,-1.35,-1.3,-1.28 }
{
\draw[thick] (\x,\x+2) -- (\x,-\x-2) ;
}
\foreach \x in {1.4,1.35,1.3,1.28}
{
\draw[thick] (\x,-\x+2) -- (\x,\x-2) ;
}
\draw[thick] (-1.5,-1.5+2) -- (1.95-2.5,1.95-0.5);
\draw[thick] (1.5,1.5-2) -- (-1.95+2.5,-1.95+0.5);
\draw[thick] (1.5,-1.5+2) -- (-1.95+2.5,1.95-0.5);
\draw[thick] (-1.5,1.5-2) -- (1.95-2.5,-1.95+0.5);
\draw[very thick, ->] (0,0.5) -- (0,1) ;
\draw[very thick, ->] (0,-0.5) -- (0,-1);
\draw[very thick, ->] (1.5,0) -- (1,0);
\draw[very thick, ->] (-1.5,0) -- (-1,0);
\draw[very thick] (-1.5,0.5) -- (1.5,0.5) -- (1.5,-0.5) -- (-1.5,-0.5) -- cycle;
\draw[dashed] (-2,0) -- (0,2) -- (2,0) -- (0,-2) -- cycle;
\end{tikzpicture}
}
%
%
\caption{Effective evolutions, case 3 and $U_0\leq 0$}\label{fig:rett6}
\end{figure}

If $U_0> 0$, then the effective evolution maintains the rectangular shape for a 
short time
(see Figure \ref{fig:rett7}, left). Namely, the 
forced 
crystalline flow becomes a rectangle after a time lapse of order $\eps$
(see Figure \ref{fig:rett4}), and then the geometric motion $E^\eps(t)$ 
is given by  ``almost 
rectangles'', that is rectangles with small perturbations of order 
$\varepsilon$ near the vertices, until the lengths of the edges with
positive $\varphi$--curvature have velocities 
$v_{2}^\eps(t)> -v_{1}^\eps(t)$. Hence there exists $T>0$ such that
$E^\eps(t)$ it can be approximated, in the Hausdorff 
topology and locally uniformly in $[0,T]$, by a family of rectangles 
$R(\ell_1^\eps(t),\ell_2^\eps(t))$ satisfying
\[
\begin{cases}\displaystyle
(\ell_1^\eps)'= -2\left(\frac{2}{\ell_{2}^\eps} +\frac{\alpha+\beta}{2}- 
\frac{(\beta-\alpha)\varepsilon}{4 \ell_{2}^\eps}\right), \\[8pt]
\displaystyle
(\ell_2^\eps)'= -2\left(\frac{2}{\ell_{1}^\eps} +\frac{\alpha+\beta}{2}- 
\frac{(\beta-\alpha)\varepsilon}{4 \ell_{1}^\eps}\right).
\end{cases}
\]
Passing to the limit as $\eps \to 0$, we obtain  
a family of rectangles 
$E(t)=R(\ell_1(t),\ell_2(t))$, $t\in [0,T]$ where $(\ell_1,\ell_2)$ is 
the solution of the system of 
ODEs 
\eqref{f:effrett} with initial datum in the set
\[
A:=\{U(\ell_1,\ell_2)>0\}\cap
\left\{\ell_2\leq \frac{-4}{\alpha+\beta} \leq \ell_1\right\}.
\]
Notice that the function
\[
J(\ell_1,\ell_2)= 4(\log (\ell_2) - \log (\ell_1))+ 
(\alpha+\beta) (\ell_2-\ell_1)
\]
is a constant of motion for system \eqref{f:effrett}. 
The phase portrait is shown in Figure \ref{fig:rett7}. In particular,
$A$
is not a positively invariant set for the system, and the behavior of the 
trajectories depends on the energy level $J(\ell_{1,0},\ell_{2,0})$ of the 
initial datum.  
 
\begin{figure}[h!] 
\begin{tikzpicture}[scale=1.2]
\draw[thick] (-1.3,0.6) -- (1.3,0.6) -- (1.3,-0.6) -- (-1.3,-0.6) -- cycle;
\draw[thick] (-1.1,0.8) -- (1.1,0.8) -- (1.1,-0.8) -- (-1.1,-0.8) -- cycle;
\draw[dashed] (-2,0) -- (0,2) -- (2,0) -- (0,-2) -- cycle;

\draw[very thick] (-1.5,0.5) -- (1.5,0.5) -- (1.5,-0.5) -- (-1.5,-0.5) -- 
cycle;
\draw[very thick, ->] (0,0.5) -- (0,1) ;
\draw[very thick, ->] (0,-0.5) -- (0,-1);
\draw[very thick, ->] (1.5,0) -- (1,0);
\draw[very thick, ->] (-1.5,0) -- (-1,0);
\end{tikzpicture}
\hskip1.5cm\includegraphics[scale=0.55]{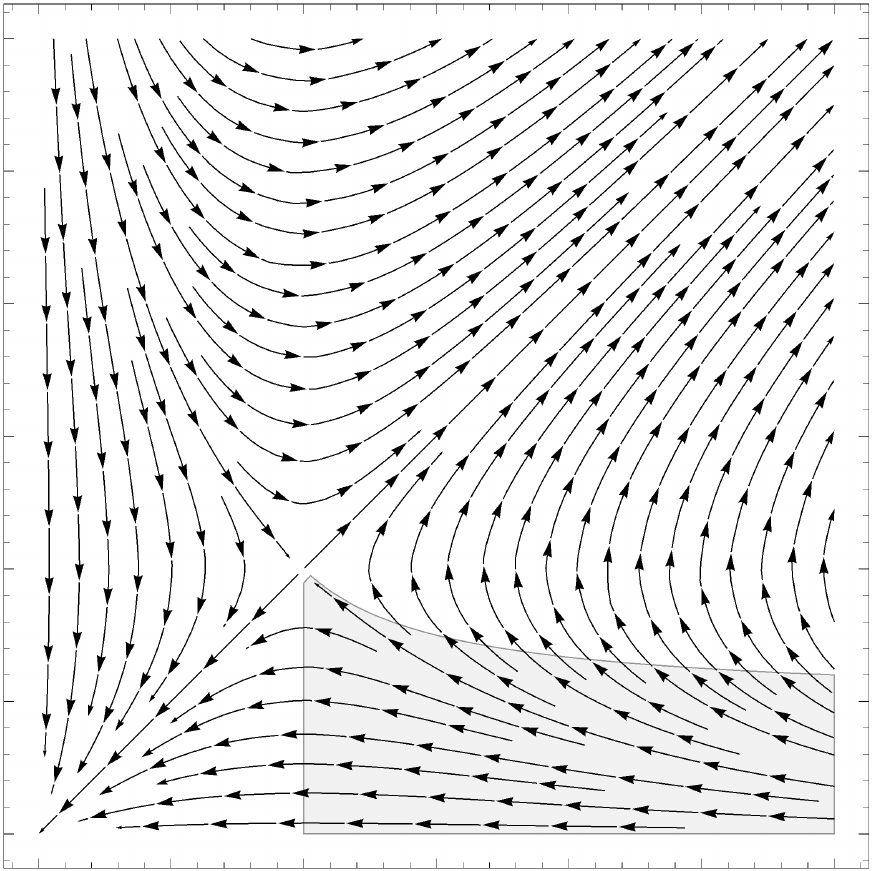} 
\caption{Left: short-time effective evolution, case 3 and $U_0> 0$.\\ Right: 
phase 
portrait of \eqref{f:effrett}, 
with the region 
$A$.}\label{fig:rett7}
\end{figure}

The level set $\{J=0\}$ is positively invariant in $A$, so that,
if $J(\ell_{1,0},\ell_{2,0})=0$, the effective evolution is given by rectangles 
converging,  as $t\to +\infty$, to 
the equilibrium square $S(-4/(\alpha+\beta))$.

If $J(\ell_{1,0},\ell_{2,0})<0$, then there exists a unique $t_0>0$ such 
that $\ell_1'(t_0)= -4/(\alpha+\beta)$, and the effective evolution for $t>t_0$ 
is the one shown in case (i): rectangles shrinking to a point in finite time.

If $J(\ell_{1,0},\ell_{2,0})>0$, then the solution enters in the region
$\{U \leq 0\}$ in finite time, so that the effective evolution 
becomes a family of 
octagons, converging to a stationary octagon in infinite time.
\end{proof}


\end{document}